\def\rr{{\mathbb R}}
\def\rn{{{\rr}^n}}
\def\zz{{\mathbb Z}}
\def\nn{{\mathbb N}}
\def\fz{\infty}
\def\ccc{{\mathbb C}}
\def\cm{{\mathcal M}}
\def\cs{{\mathcal S}}
\def\cl{{\mathcal L}}
\def\cp{{\mathcal P}}
\def\az{\alpha}
\def\supp{{\rm{\,supp\,}}}
\def\esup{\mathop\mathrm{\,ess\,sup\,}}
\def\einf{\mathop\mathrm{\,ess\,inf\,}}
\def\ls{\lesssim}
\def\lz{\lambda}
\def\sz{\sigma}
\def\hs{\hspace{0.3cm}}
\def\dint{\displaystyle\int}
\def\r{\right}
\def\lf{\left}
\def\bint{{\ifinner\rlap{\bf\kern.30em--}
\int\else\rlap{\bf\kern.35em--}\int\fi}\ignorespaces}
\def\sbint{{\ifinner\rlap{\bf\kern.32em--}
\hspace{0.078cm}\int\else\rlap{\bf\kern.45em--}\int\fi}\ignorespaces}
\def\dfrac{\displaystyle\frac}
\def\dsup{\displaystyle\sup}
\newtheorem{theorem}{Theorem}[section]
\newtheorem{lemma}[theorem]{Lemma}
\newtheorem{corollary}[theorem]{Corollary}
\theoremstyle{definition}
\newtheorem{remark}[theorem]{Remark}
\newtheorem{definition}[theorem]{Definition}
\numberwithin{equation}{section}
\numberwithin{equation}{section}
\numberwithin{equation}{section}
\begin{document}

\arraycolsep=1pt

\title{\Large\bf The dual spaces of variable anisotropic Hardy-Lorentz spaces and continuity of a class of linear operators \footnotetext{\hspace{-0.35cm} {\it 2010 Mathematics Subject Classification}.
{Primary 42B20; Secondary 42B30, 46E30.}
\endgraf{\it Key words and phrases.} Anisotropy, Hardy-Lorentz space, atom, Calder\'on-Zygmund operator, BMO space.
\endgraf
E-mail:
\texttt{wangxjmath@126.com}(Wenhua WANG),
\texttt{atwangmath@163.com}(Aiting WANG).
\endgraf This work is partially supported by the projects of university
level planning in Qinghai Minzu University
(Grant No. 2022GH25).
\endgraf $^\ast$\,Corresponding author
}}
\author{\bf Wenhua WANG$^1$, Aiting WANG$^*$$^2$}
\date{\small 1.
School of Mathematics and Statistics,
Wuhan University,
Wuhan 430072, Hubei, P. R. China, ORCID iD:https://orcid.org/0000-0001-5863-2834.\\
2. School of Mathematics and Statistics,
Qinghai Minzu University, 810000, Qinghai, P. R. China. ORCID iD:https://orcid.org/0000-0001-5338-9097. }
\maketitle

\vspace{-0.8cm}

\begin{center}
\begin{minipage}{13cm}\small
{\noindent{\bf Abstract} \
In this paper, the authors obtain
the continuity of a class of linear operators
on variable anisotropic Hardy-Lorentz spaces.
  In addition, the authors also obtain that the dual space of variable anisotropic Hardy-Lorentz spaces is the anisotropic BMO-type spaces with variable exponents.
  This result is still new even when the exponent function $p(\cdot)$ is $p$. }
\end{minipage}
\end{center}

\section{Introduction}
\hskip\parindent
As is known to all, Hardy space on the Euclidean space $\rn$ is  a good substitutes of Lebesgue space $L^p(\rn)$ when $p\in(0,1]$,
 and plays an important role in haronmic analysis and $\mathbf{PDE}$s; see, for examples, \cite{cw77,fs72,lx21,s93,s60,sa89,t17}. Moreover, when studying the boundedness of some
operators in the critical case, the weak Hardy space $w\mathcal{H}^{p}(\rn)$ naturally appears and it is a good substitute of $\mathcal{H}^p(\rn)$. $w\mathcal{H}^{p}(\rn)$ with $p\in(0,\,1)$ was first introduced by Fefferman and Soria \cite{fs86} to find out the biggest space from which the Riesz transform is bounded to the weak Lebesgue space $wL^{1}(\rn)$. In 2007, Abu-Shammala and Torchinsky  \cite{at07} introduced the Hardy-Lorentz spaces $\mathcal{H}^{p,\,r}(\rn)$ for the full range $p\in(0,\,1]$ and $r\in(0,\,\infty]$, and
obtained some real-variable characterizations of this space.
In 2016, Liu et al. \cite{lyy16}
introduced the anisotropic Hardy-Lorentz space $\mathcal{H}^{p,\, r}_A({\mathbb {R}}^n)$ associated with a general expansive dilation $A$,  including the classical isotropic Hardy-Lorentz space of Abu-Shammala and Torchinsky.

As a generalization, variable exponent function spaces have their applications in fluid dynamics \cite{am02}, image processing \cite{clr06}, $\mathbf{PDE}$s and variational calculus \cite{f07,t17}.
Let $p(\cdot):\rn\rightarrow(0,\,\infty)$ be a variable exponent function. Recently, Liu et al. \cite{lyy17}
introduced the variable  anisotropic Hardy-Lorentz space $\mathcal{H}^{p(\cdot),\, r}_A({\mathbb {R}}^n)$, via the radial grand maximal function, and then established its some real-variable characterizations, respectively, in terms of atom, the radial and the non-tangential maximal functions. For more information about variable function spaces, see \cite{cf13,cw14,dhh11,l21,ns12,s13,t19,wl12}.

 To complete the theory of the variable anisotropic Hardy-Lorentz space $\mathcal{H}^{p(\cdot),\,r}_A({\mathbb {R}}^n)$, in this article, we obtain
the boundedness of a class of  Calder\'on-Zygmund operators
from $\mathcal{H}^{p(\cdot),\,r}_{A}(\rn)$ to variable Lorentz space $L^{p(\cdot),\,r}(\rn)$ and from $\mathcal{H}^{p(\cdot),\,r}_{A}(\rn)$ to itself. In addition, we also obtain the dual space of $\mathcal{H}^{p(\cdot),\,r}_ A(\mathbb{R}^n)$ is the anisotropic
BMO-type space with variable exponents.

Precisely, this article is organized as follows.

In Section \ref{s2}, we recall some notations and definitions
concerning expansive dilations,  the variable Lorentz space $L^{p(\cdot),\,r}(\rn)$ and the variable anisotropic Hardy-Lorentz space $\mathcal{H}^{p(\cdot),\,r}_A({\mathbb {R}}^n)$, via the radial grand maximal function.


Section \ref{s3} is devoted to establishing
the boundedness of anisotropic convolutional $\delta$-type  Calder\'on-Zygmund operators
from $\mathcal{H}^{p(\cdot),\,r}_{A}(\rn)$ to $L^{p(\cdot),\,r}(\rn)$ and from $\mathcal{H}^{p(\cdot),\,r}_{A}(\rn)$ to itself.

In Section \ref{s4},
we prove that the dual space of $\mathcal{H}^{p(\cdot),\,r}_ A(\mathbb{R}^n)$ is the anisotropic
BMO-type space with variable exponents (see Theorem \ref{t5.1}).
 For this purpose, we first introduce a new kind of anisotropic BMO-type spaces with variable exponents $\mathcal{BMO}_A ^{p(\cdot),\,q,\,s}(\mathbb{R}^n)$ in Definition \ref{d4.1}, which includes the space $\mathrm{BMO}(\mathbb{R}^n)$ of John and Nirenberg \cite{jn61}. It is worth pointing
out that  this result is also new, when $\mathcal{H}^{p(\cdot),\,r}_{A}(\rn)$ is reduced to
$\mathcal{H}^{p,\,r}_{A}(\rn)$.

Finally, we make some conventions on notation.
Let $\nn:=\{1,\, 2,\,\ldots\}$ and $\zz_+:=\{0\}\cup\nn$.
For any $\az:=(\az_1,\ldots,\az_n)\in\zz_+^n:=(\zz_+)^n$, let
$|\az|:=\az_1+\cdots+\az_n$ and
$\partial^\az:=
\lf(\frac{\partial}{\partial x_1}\r)^{\az_1}\cdots
(\frac{\partial}{\partial x_n})^{\az_n}.$
In this article, we denote by $C$ a \emph{positive
constant} which is independent of the main parameters, but it may
vary from line to line.  For any $q\in[1,\,\infty]$, we denote by $q^{'}$ its conjugate index.
For any $a\in\rr$, $\lfloor a\rfloor$ denotes the
\emph{maximal integer} not larger than $a$.
The \emph{symbol} $D\ls F$ means that $D\le
CF$. If $D\ls F$ and $F\ls D$, we then write $D\sim F$.
If a set $E\subset \rn$, we denote by $\chi_E$ its \emph{characteristic
function}. If there are no special instructions, any space $\mathcal{X}(\rn)$ is denoted simply by $\mathcal{X}$.



\section{Preliminaries \label{s2}}
\hskip\parindent
Firstly,
we recall the definitions of {{anisotropic dilations}}
on $\rn$; see \cite[p.\,5]{b03}. A real $n\times n$ matrix $A$ is called an {\it
anisotropic dilation}, shortly a {\it dilation}, if
$\min_{\lz\in\sz(A)}|\lz|>1$, where $\sz(A)$ denotes the set of
all {\it eigenvalues} of $A$. Let $\lz_-$ and $\lz_+$ be two {\it positive numbers} such that
$$1<\lz_-<\min\{|\lz|:\ \lz\in\sz(A)\}\le\max\{|\lz|:\
\lz\in\sz(A)\}<\lz_+.$$

By \cite[Lemma 2.2]{b03}, we know that, for a given dilation $A$,
there exist a number $r\in(1,\,\fz)$ and a set $\Delta:=\{x\in\rn:\,|Px|<1\}$, where $P$ is some non-degenerate $n\times n$ matrix, such that $$\Delta\subset r\Delta\subset A\Delta,$$ and we can
assume that $|\Delta|=1$, where $|\Delta|$ denotes the
$n$-dimensional Lebesgue measure of the set $\Delta$. Let
$B_k:=A^k\Delta$ for $k\in \zz.$ Then $B_k$ is open, $$B_k\subset
rB_k\subset B_{k+1} \ \ \mathrm{and} \ \ |B_k|=b^k,$$ here and hereafter, $b:=|\det A|$.
An ellipsoid $x+B_k$ for some $x\in\rn$ and $k\in\zz$ is called a {\it dilated ball}.
Denote
\begin{eqnarray}\label{e2.1}
\mathfrak{B}:=\{x+B_k:\ x\in \rn,\,k\in\zz\}.
\end{eqnarray}
Throughout the whole paper, let $\sigma$ be the {\it smallest integer} such that $2B_0\subset A^\sigma B_0$
and, for any subset $E$ of $\rn$, let $E^\complement:=\rn\setminus E$. Then,
for all $k,\,j\in\zz$ with $k\le j$, it holds true that
\begin{eqnarray}
&&B_k+B_j\subset B_{j+\sz},\label{e2.3}\\
&&B_k+(B_{k+\sz})^\complement\subset
(B_k)^\complement,\label{e2.4}
\end{eqnarray}
where $E+F$ denotes the {\it algebraic sum} $\{x+y:\ x\in E,\,y\in F\}$
of  sets $E,\, F\subset \rn$.

 Recall a \textit{quasi-norm}, associated with
dilation $A$, is a Borel measurable mapping
$\rho:\rr^{n}\to [0,\infty)$, satisfying
\begin{enumerate}
\item[\rm{(i)}] $\rho(x)>0$ for all $x \in \rn\setminus\{ \vec 0_n\}$,
here and hereafter, $\vec 0_n$ denotes the origin of $\rn$;

\item[\rm{(ii)}] $\rho(Ax)= b\rho(x)$ for all $x\in \rr^{n}$, where, as above, $b:=|\det A|$;

\item[\rm{(iii)}] $ \rho(x+y)\le H\lf[\rho(x)+\rho(y)\r]$ for
all $x,\, y\in \rr^{n}$, where $H\in[1,\,\fz)$ is a constant independent of $x$ and $y$.
\end{enumerate}

By \cite[Lemma 2.4]{b03},
we know that all homogeneous quasi-norms associated with a given dilation
$A$ are equivalent. Therefore, for a
fixed dilation $A$, in what follows, for convenience, we
always use the {\it{step homogeneous quasi-norm}} $\rho$ defined by setting,  for all $x\in\rn$,
\begin{equation*}
\rho(x):=\sum_{k\in\zz}b^k\chi_{B_{k+1}\setminus B_k}(x)\ {\rm
if} \ x\ne \vec 0_n,\hs {\mathrm {or\ else}\hs } \rho(\vec 0_n):=0.
\end{equation*}
By \eqref{e2.3}, we know that, for all $x,\,y\in\rn$,
$$\rho(x+y)\le b^\sz[\rho(x)+\rho(y)];$$  Moreover, $(\rn,\, \rho,\, dx)$ is a space of
homogeneous type in the sense of Coifman and Weiss \cite{cw77},
where $dx$ denotes the {\it $n$-dimensional Lebesgue measure}.

A measurable function $p(\cdot): \rn\rightarrow(0,\,\infty)$ is called a {\it variable exponent}. For any variable exponent $p(\cdot)$, let
\begin{eqnarray}\label{e2.5}
&&p_- :=\einf_{x\in\rn} p(x)\quad \mathrm{and} \ \ p_+ :=\esup_{x\in\rn} p(x).
\end{eqnarray}
Denote by $\cp$ the set of all variable exponents $p(\cdot)$ satisfying $0<p_-\leq p_+<\infty$.

Let $f$ be a measurable function on $\rn$ and $p(\cdot)\in\cp$. Define
$$\|f\|_{L^{p(\cdot)}}:=\inf\lf\{\lambda \in(0,\,\infty):\varrho_{p(\cdot)}(f/\lambda)\leq 1\r\},$$
where $$\varrho_{p(\cdot)}(f):=\int_\rn |f(x)|^{p(x)}\, dx.$$
Moreover, the {\it variable Lebesgue space} $L^{p(\cdot)}$ is defined to be the set of all measurable functions $f$
satisfying that $\varrho_{p(\cdot)}(f)<\infty$, equipped with the quasi-norm $\|f\|_{L^{p(\cdot)}}$.

\begin{remark}\rm{ \cite{lyy17}}\label{r2.1}
Let $p(\cdot)\in\cp$.
\begin{enumerate}
\item[\rm{(i)}]  For any
$r\in(0,\,\infty)$ and $f\in L^{p(\cdot)}$,
$\lf\|{|f|^r}\r\|_{L^{p(\cdot)}}=\lf\|f\r\|^r_{L^{rp(\cdot)}}.$
Moreover, for any $\mu\in\ccc$ and $f,g\in L^{p(\cdot)}$,
$\lf\|\mu f\r\|_{L^{p(\cdot),}}=|\mu|\lf\|f\r\|_{L^{p(\cdot)}}$ and
$\lf\|f+g\r\|^{\underline{p}}_{L^{p(\cdot)}}\leq
\lf\|f\r\|^{\underline{p}}_{L^{p(\cdot)}}+
\lf\|g\r\|^{\underline{p}}_{L^{p(\cdot)}},$
where
\begin{align}\label{e2.5.1}
\underline{p}:=\min\{p_-,\,1\}
\end{align}
with $p_-$ as in \eqref{e2.5}.
\item[\rm{(ii)}] For any function $f\in L^{p(\cdot)}$
with $\lf\|f\r\|_{L^{p(\cdot)}}>0$,
$\varrho_{p(\cdot)}(f/{\|f\|_{L^{p(\cdot)}}})=1$
and, for $\lf\|f\r\|_{L^{p(\cdot)}}\leq1$, then
$\varrho_{p(\cdot)}(f)\leq\lf\|f\r\|_{L^{p(\cdot)}}$.
\end{enumerate}
\end{remark}

\begin{definition}
Let $p(\cdot)\in\cp$. The variable Lorentz space $L^{p(\cdot),\,r}$ is defined to be the set of all measurable functions $f$ such that
\begin{eqnarray*}
\|f\|_{L^{p(\cdot),\,r}}:=
\lf\{ \begin{array}{ll}
\lf[\dint_0^\infty \lambda^r \lf\|\chi_{\{x\in\rn:|f(x)|>\lambda\}}\r\|^r_{L^{p(\cdot)}}
\dfrac {d\lambda}{\lambda}\r]^{1/r},& \ r\in(0,\,\fz),\\
\dsup_{\lambda\in(0,\,\fz)}\lf[\lambda\lf\|\chi_{\{x\in\rn:
|f(x)|>\lambda\}}\r\|_{L^{p(\cdot)}}\r], & \ r=\fz
\end{array}\r.
\end{eqnarray*}
is finite.
\end{definition}

We say that $p(\cdot)\in \cp$ satisfy the {\it globally log-H\"{o}lder continuous condition}, denoted by
$p(\cdot)\in C^{\log}$, if there exist two positive constants $C_{\log}(p)$ and $C_{\infty}$, and $p_{\infty}\in \rr$
such that, for any $x, y\in\rn$,
\begin{align*}
|p(x)-p(y)|\leq \frac{C_{\log}(p)}{\log(e+1/\rho(x-y))}
\end{align*}
and
\begin{align*}
|p(x)-p_{\infty}|\leq \frac{C_{\infty}}{\log(e+\rho(x))}.
\end{align*}


A $C^\infty$ function $\varphi$ is said to belong to the Schwartz class $\cs$ if,
for every integer $\ell\in\zz_+$ and multi-index $\alpha$,
$\|\varphi\|_{\alpha,\ell}:=\dsup_{x\in\rn}[\rho(x)]^\ell|\partial^\az\varphi(x)|<\infty$.
The dual space of $\cs$, namely, the space of all tempered distributions on $\rn$ equipped with the weak-$\ast$
topology, is denoted by $\cs'$. For any $N\in\zz_+$, let
\begin{eqnarray*}
\cs_N:=\lf\{\varphi\in\cs:\ \|\varphi\|_{\alpha,\ell}\leq1,\ |\alpha|\leq N,\ \ \ell\leq N\r\}.
\end{eqnarray*}
In what follows, for $\varphi\in \cs$, $k\in\zz$ and $x\in\rn$, let $\varphi_k(x):= b^{-k}\varphi\lf(A^{-k}x\r)$.

\begin{definition}\label{d2.p}
Let $\varphi\in \cs$ and $f\in \cs'$. For any given $N\in \nn$, the{\it{ radial grand maximal function}} $M_{N}(f)$ of $f\in \cs'$ is defined by setting,
for any $x\in\rn$,
\begin{eqnarray*}
M_{N}(f)(x):=\sup_{\varphi\in \cs_N}\sup_{ k\in\zz}|f*\varphi_{k}(x)|.
\end{eqnarray*}
\end{definition}

\begin{definition}\rm{\cite{lyy17}}\label{d2.4}
Let $p(\cdot)\in C^{\log}$, $r\in(0,\,\infty)$, $A$ be a dilation and $N\in[\lfloor({1/\underline{p}}-1)\ln b/\ln\lambda_{-}\rfloor+2,\,\infty)$, where $\underline{p}$
is as in \eqref{e2.5.1}. The{\it{ variable anisotropic Hardy-Lorentz space}} $\mathcal{H}_{A}^{p(\cdot),\,r}$ is defined as
\begin{eqnarray*}
\mathcal{H}_{A}^{p(\cdot),\,r}:=\lf\{f\in \cs':M_{N}(f)\in L^{p(\cdot),\,r}\r\}
\end{eqnarray*}
and, for any $f\in \mathcal{H}_{A}^{p(\cdot),\,r}$, let $\|f\|_{\mathcal{H}_{A}^{p(\cdot),\,r}}:=\|M_{N}(f)\|_{L^{p(\cdot),\,r}}$.
\end{definition}
\begin{remark} Let $p(\cdot)\in C^{\log}$, $r\in(0,\,\infty)$.
\begin{enumerate}
\item[\rm{(i)}]
  When
 $p(\cdot):=p$, where $p\in(0,\,\infty)$, the space $\mathcal{H}^{p(\cdot),\,r}_{A}$ is reduced to the anisotropic Hardy-Lorentz space $\mathcal{H}^{p,\,r}_{A}$ studied in \cite{lyy16}.
\item[\rm{(ii)}]
When $A:=2{\rm I}_{n\times n}$ and $p(\cdot):=p$,
the space $\mathcal{H}^{p(\cdot),\,r}_{A}$ is reduced to the Hardy-Lorentz space $\mathcal{H}^{p,\,r}$ studied in \cite{at07}.
\end{enumerate}
\end{remark}

\begin{definition}\rm{\cite{lyy17}}\label{d3.1}
Let $p(\cdot)\in\cp$, $q\in(1,\,\infty]$ and
$$s\in[\lfloor(1/{p_-}-1) {\ln b/\ln \lambda_-}\rfloor,\,\infty)\cap\zz_+$$ with $p_-$ as in \eqref{e2.5}. An {\it anisotropic $(p(\cdot),\,q,\,s)$-atom} is a measurable function $a$ on $\rn$ satisfying
\begin{enumerate}
\item[\rm{(i)}] (support) $\supp a:=\overline{\{x\in\rn:a(x)\neq 0\}}\subset B$, where $B\in\mathfrak{B}$ and $\mathfrak{B}$ is as in \eqref{e2.1};
\item[\rm{(ii)}] (size) $\|a\|_{L^q}\le \frac{|B|^{1/q}}{\|\chi_B\|_{L^{p(\cdot)}}}$;
\item[\rm{(iii)}] (vanishing moment) $\int_\rn a(x)x^\alpha dx=0$ for any $\alpha\in \mathbb{Z}^n_+$ with $|\alpha|\leq s$.
\end{enumerate}
\end{definition}

\begin{definition}\rm{\cite{lyy17}}
Let $p(\cdot)\in C^{\log}$, $r\in(0,\,\infty)$, $q\in(1,\,\infty]$,
$s\in[\lfloor(1/{p_-}-1) {\ln b/\ln \lambda_-}\rfloor,\,\infty)\cap\zz_+$ with $p_-$ as in \eqref{e2.5}
and $A$ be a dilation. The {\it anisotropic variable atomic Hardy-Lorentz space}
$\mathcal{H}^{p(\cdot),\,q,\,s,\,r}_{A,\,\rm{atom}}$
is defined to be the set of all distributions $f\in \cs'$ satisfying that there exists a sequence of
$(p(\cdot),\,q,\,s)$-atoms, $\{a^k_i\}_{i\in\nn,k\in\zz}$, supported, respectively,
on $\{x^k_i+B_{\ell^k_i}\}_{i\in\nn,k\in\zz}\subset\mathfrak{B}$ and a positive constant $\widetilde{C}$ such that
$\sum_{i\in\nn} \chi_{x^k_i+A^{j_0}B_{\ell^k_i}}(x)\leq\widetilde{C}$ for any $x\in\rn$ and $k\in\zz$, with some
$j_0\in\zz\, \backslash\, \nn$, and
\begin{align*}
f=\sum_{k\in\zz} \sum_{i\in\nn} \lz^k_ia^k_i \ \ \mathrm{in\ } \ \cs',
\end{align*}
where $\lambda^k_i\sim 2^k\|\chi_{x^k_i+B_{\ell^k_i}}\|_{L^{p(\cdot)}}$
for any $k\in\zz$ and $i\in\nn$ with the equivalent positive constants independent of $k$ and $i$.

Moreover, for any $f\in \mathcal{H}^{p(\cdot),\,q,\,s,\,r}_{A,\,\rm{atom}}$, define
$$\|f\|_{\mathcal{H}^{p(\cdot),\,q,\,s,\,r}_{A,\,\rm{atom}}}
:=\inf\lf[\sum_{k\in\zz} \lf\|\lf\{\sum_{i\in\nn} \lf[\frac{\lambda^k_i\chi_{x^k_i+B_{\ell^k_i}}}
{\lf\|\chi_{x^k_i+B_{\ell^k_i}}\r\|_{L^{p(\cdot)}}}\r]
^{\underline{p}}\r\}^{1/\underline{p}}\r\|^r
_{L^{p(\cdot)}}\r]^{1/r}.$$
\end{definition}

\section{The continuity of Calder\'on-Zygmund operators \label{s3}}
\hskip\parindent
In this section, we get
the continuity of anisotropic convolutional $\delta$-type Calder\'on-Zygmund operators
from $\mathcal{H}^{p(\cdot),\,r}_{A}$ to $L^{p(\cdot),\,r}$ or from $\mathcal{H}^{p(\cdot),\,r}_{A}$ to itself.

Let $\delta\in (0,\,\frac{\ln \lambda_{+}}{\ln b})$. We call a linear operator $T$ is an anisotropic convolutional $\delta$-type Calder\'on-Zygmund operator, if $T$ is bounded on $L^2$ with kernel $\mathcal{K}\in \cs'$ coinciding with a locally integrable function on $\mathbb{R}^{n}\setminus\{ \vec 0_n\}$, and satisfying that there exists a positive constant $C$ such that, for any $x,\,y\in \mathbb{R}^{n}$ with $\rho(x)>b^{2\sigma}\rho(y)$,
$$|\mathcal{K}(x-y)-\mathcal{K}(x)|\leq C\frac{[\rho(y)]^{\delta}}{[\rho(x)]^{1+\delta}}.$$
For any $f\in L^2$, define $T(f)(x):=\mathrm{p.v.}\,\mathcal{K}\ast f(x)$.

\begin{theorem}\label{t5.3x}
Let $p(\cdot)\in C^{\log}$, $r\in(0,\,\infty)$ and $\delta\in (0,\,\frac{\ln \lambda_{+}}{\ln b})$. Assume that $T$ is an anisotropic convolutional $\delta$-type Calder\'on-Zygmund operator. If $p_{-}\in (\frac{1}{1+\delta},\,1)$ with $p_{-}$ as in \eqref{e2.5}, then there exists a positive constant $C$ such that, for any $\mathcal{H}^{p(\cdot),\,r}_{A}$,
\begin{itemize}
\item[{\rm (i)}]
$
\|T(f)\|_{L^{p(\cdot),\,r}}\leq C\|f\|_{\mathcal{H}^{p(\cdot),\,r}_{A}};
$
\item[{\rm (ii)}]
$
\|T(f)\|_{\mathcal{H}^{p(\cdot),\,r}_{A}}\leq C\|f\|_{\mathcal{H}^{p(\cdot),\,r}_{A}}.
$
\end{itemize}
\end{theorem}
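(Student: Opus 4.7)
The plan is to reduce the theorem to estimating $T$ on a single atom and then summing over an atomic decomposition. By an atomic characterization of $\mathcal{H}^{p(\cdot),r}_A$ (analogous to the one recalled in Section~\ref{s2}), any $f\in \mathcal{H}^{p(\cdot),r}_A$ can be written as $f=\sum_{k\in\zz}\sum_{i\in\nn}\lz_i^k a_i^k$ in $\cs'$, with each $a_i^k$ a $(p(\cdot),q,s)$-atom supported on $x_i^k+B_{\ell_i^k}$, $\lz_i^k\sim 2^k\|\chi_{x_i^k+B_{\ell_i^k}}\|_{L^{p(\cdot)}}$, and the atomic quasi-norm controlled by $\|f\|_{\mathcal{H}^{p(\cdot),r}_A}$. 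The hypothesis $p_->1/(1+\delta)$ makes $s=0$ an admissible choice, so I would work with atoms carrying only the cancellation $\int a=0$.

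Next, I would derive two pointwise estimates for $T(a)$ when $a$ is an atom supported on $B:=x_0+B_\ell$. On the dilated ball $\widetilde B:=x_0+A^{\sigma'}B_\ell$ with $\sigma'>\sigma$ large enough, the $L^q$-boundedness of $T$ (which follows from the $L^2$ boundedness and the standard Calder\'on--Zygmund theory on spaces of homogeneous type, using $(\rn,\rho,dx)$) together with Definition~\ref{d3.1}(ii) yields
\[
\|T(a)\chi_{\widetilde B}\|_{L^q}\ls \|a\|_{L^q}\ls \frac{|B|^{1/q}}{\|\chi_B\|_{L^{p(\cdot)}}}.
\]
On $(\widetilde B)^\complement$, I would use the vanishing moment to write $T(a)(x)=\int_B[\mathcal{K}(x-y)-\mathcal{K}(x-x_0)]a(y)\,dy$ and invoke the H\"older regularity of $\mathcal{K}$; the defining inequality for $\delta$-type Calder\'on--Zygmund kernels, combined with $\|a\|_{L^1}\ls |B|/\|\chi_B\|_{L^{p(\cdot)}}$, gives
\[
|T(a)(x)|\ls \frac{b^{\ell(1+\delta)}}{[\rho(x-x_0)]^{1+\delta}\,\|\chi_B\|_{L^{p(\cdot)}}},\qquad x\in(\widetilde B)^\complement.
\]
These two bounds say that $T(a)$ is, up to a harmless constant, a $(p(\cdot),q,0,\delta)$-molecule centered on $x_0+B_\ell$.

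For assertion (i), I would plug the molecular bound into the Lorentz quasi-norm. Using the $\underline p$-subadditivity of Remark~\ref{r2.1}, split
\[
\{|T(f)|>\lz\}\subset\bigcup_k\Bigl\{\textstyle\sum_i\lz_i^k|T(a_i^k)|>\lz/C\Bigr\},
\]
and control each slice by the $L^q$-piece on $\widetilde B_i^k$ (via a Fefferman--Stein-type vector-valued maximal inequality on $L^{p(\cdot)}$, which is available under the hypothesis $p(\cdot)\in C^{\log}$) plus the off-diagonal piece on $(\widetilde B_i^k)^\complement$ (which is dominated, after summing in $i$ for fixed $k$, by the anisotropic Hardy--Littlewood maximal operator applied to $\sum_i\lz_i^k\chi_{x_i^k+B_{\ell_i^k}}/\|\chi_{x_i^k+B_{\ell_i^k}}\|_{L^{p(\cdot)}}$). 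Integrating $\lz^{r-1}$ in $\lz$ and summing over $k$ then recovers $\|f\|_{\mathcal{H}^{p(\cdot),r}_A}$. For assertion (ii), I would decompose each molecule $T(a_i^k)$ dyadically over the anisotropic annuli $x_i^k+(A^{j+1}B_{\ell_i^k}\setminus A^jB_{\ell_i^k})$, renormalize each piece (subtracting its mean if needed to restore the moment) into a $(p(\cdot),q,0)$-atom with coefficient decaying like $b^{-j\delta}\|\chi_{x_i^k+A^jB_{\ell_i^k}}\|_{L^{p(\cdot)}}$, and apply the atomic characterization of $\mathcal{H}^{p(\cdot),r}_A$ to the resulting double sum.

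The main obstacle will be controlling how $\|\chi_{x+B_j}\|_{L^{p(\cdot)}}$ behaves under anisotropic dilations $j\mapsto j+m$ using only log-H\"older regularity, and then absorbing the resulting growth into the geometric series $\sum_{j\ge 0}b^{-j\delta}$. This is precisely where the quantitative hypothesis $\delta>1/p_--1$ enters: it provides the slack needed to close the summation. Once this variable-exponent molecular-to-atomic reduction is made quantitative, the Lorentz-norm summation in $\lz$ is routine and both (i) and (ii) follow simultaneously.
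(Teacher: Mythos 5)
Your overall strategy for (i) closely matches the paper's: decompose $f$ atomically, estimate $T(a)$ on and off a dilated ball via $L^q$-boundedness and the $\delta$-regularity of the kernel (yielding the familiar maximal-function-to-the-power-$(1+\delta)$ bound), then sum using the vector-valued maximal inequality on $L^{p(\cdot)}$ from Lemma~\ref{l3.5}. For (ii), however, you take a genuinely different route. The paper does not pass through a molecular characterization; instead it estimates $M_N(T(f))$ directly by the same near--far splitting as in (i), using a pointwise bound for $M_N(Ta)$ off the dilated ball derived from a Stein-type estimate on $\varphi_j*\mathcal{K}$. Your proposed molecular-to-atomic reconstruction would require a quantitative comparison of $\|\chi_{x+B_{j+m}}\|_{L^{p(\cdot)}}$ with $\|\chi_{x+B_j}\|_{L^{p(\cdot)}}$ under log-H\"{o}lder continuity---which you correctly flag as the main obstacle---whereas the paper's route avoids this extra machinery by staying at the level of maximal functions.

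There are two genuine gaps. First, you write $f=\sum_{k,i}\lambda_i^k a_i^k$ only in $\cs'$ and then tacitly use $T(f)=\sum_{k,i}\lambda_i^k T(a_i^k)$. Distributional convergence does not justify this interchange; one needs the decomposition to converge in $L^q$, where $T$ is continuous. The paper handles this by first restricting to the dense subclass $\mathcal{H}^{p(\cdot),\,r}_{A}\cap L^q$ (Lemma~\ref{l4.2xx}) and then invoking Lemma~\ref{l4.1xy}, which shows that for such $f$ the atomic sum converges in $L^q$ as well as in $\mathcal{H}^{p(\cdot),\,r}_{A}$; your proposal omits this step, and without it the identity $T(f)=\sum T(a_i^k)$ is unsupported. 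Second, the claim that $p_->1/(1+\delta)$ ``makes $s=0$ an admissible choice'' is not correct in general. Admissibility of $s=0$ requires $\lfloor(1/p_--1)\ln b/\ln\lambda_-\rfloor=0$, i.e.\ $1/p_--1<\ln\lambda_-/\ln b$, whereas the hypothesis only gives $1/p_--1<\delta<\ln\lambda_+/\ln b$; since $\lambda_-\le\lambda_+$ these are not equivalent. This is not fatal---the kernel estimate uses only the first vanishing moment, so one may keep the larger $s$ required by the atomic characterization---but the assertion as written is false and should be removed.
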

\begin{remark}
When $p(\cdot):=p$, Theorem \ref{t5.3x} coincides with \cite[Theorem 6.16]{lyy16}.
\end{remark}


To prove Theorem \ref{t5.3x}, we need some technical lemmas.

\begin{lemma}\rm{\cite[Theorem 4.8]{lyy17}}\label{l3.1}
Let $p(\cdot)\in C^{\log}$, $r\in(0,\,\infty)$, $q\in(\max\{p_+,\,1\},\,\infty]$
with $p_+$ as in \eqref{e2.5} and
$s\in[\lfloor(1/{p_-}-1) {\ln b/\ln \lambda_-}\rfloor,\,\infty)\cap\zz_+$ with $p_-$ as in \eqref{e2.5}.
Then $$\mathcal{H}^{p(\cdot),\,r}_{A}=\mathcal{H}^{p(\cdot),\,q,\,s,\,r}_{A,\,\rm{atom}}$$ with equivalent quasi-norms.
\end{lemma}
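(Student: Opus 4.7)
The plan is to establish the two inclusions $\mathcal{H}^{p(\cdot),r}_A \hookrightarrow \mathcal{H}^{p(\cdot),q,s,r}_{A,\mathrm{atom}}$ and $\mathcal{H}^{p(\cdot),q,s,r}_{A,\mathrm{atom}} \hookrightarrow \mathcal{H}^{p(\cdot),r}_A$ with controlled quasi-norms. Since the result is quoted verbatim from \cite{lyy17}, I will only describe the template of the proof that one would carry out.

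For the direction $\mathcal{H}^{p(\cdot),r}_A \hookrightarrow \mathcal{H}^{p(\cdot),q,s,r}_{A,\mathrm{atom}}$, the plan is to perform a Calder\'on--Zygmund decomposition of $f\in\mathcal{H}^{p(\cdot),r}_A$ at dyadic heights $2^k$, $k\in\zz$, adapted to the anisotropic structure. First I would set $\Omega_k:=\{x\in\rn:M_N(f)(x)>2^k\}$, then, using a Whitney-type covering by elements of $\mathfrak{B}$, produce a family $\{x^k_i+B_{\ell^k_i}\}_{i\in\nn}\subset\mathfrak{B}$ whose dilates $\{x^k_i+A^{j_0}B_{\ell^k_i}\}$ retain bounded overlap for some fixed $j_0\in\zz\setminus\nn$. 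Using a smooth partition of unity subordinate to this covering and subtracting, on each ball, the unique polynomial $P^k_i$ of degree at most $s$ with respect to a natural inner product, one obtains molecules $b^k_i$ with vanishing moments up to order $s$. The standard estimates on $b^k_i$ together with the size bound dictated by the layer $\Omega_k\setminus\Omega_{k+1}$ show that $a^k_i:=b^k_i/\lambda^k_i$ is a $(p(\cdot),q,s)$-atom, where $\lambda^k_i\sim 2^k\|\chi_{x^k_i+B_{\ell^k_i}}\|_{L^{p(\cdot)}}$. Convergence $f=\sum_{k,i}\lambda^k_ia^k_i$ in $\cs'$ follows from the weak density of the good parts and the rapid decay of molecules.

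For the opposite direction, given an admissible atomic sum $f=\sum_{k\in\zz}\sum_{i\in\nn}\lambda^k_i a^k_i$, the plan is to estimate $M_N(f)$ layer by layer. I would write, for each $\lambda>0$,
\begin{align*}
\|\chi_{\{M_N(f)>\lambda\}}\|_{L^{p(\cdot)}}
\le \Big\|\chi_{\{M_N(F_1)>\lambda/2\}}\Big\|_{L^{p(\cdot)}}
+\Big\|\chi_{\{M_N(F_2)>\lambda/2\}}\Big\|_{L^{p(\cdot)}},
\end{align*}
where, after fixing $k_0$ with $2^{k_0}\sim\lambda$, $F_1$ collects the atomic terms with $k\le k_0$ and $F_2$ those with $k>k_0$. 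On each atom, one splits the action of $M_N$ into the local part on $x^k_i+A^{j_0}B_{\ell^k_i}$, controlled via the $L^q$-bound of $M_N$ together with the atom size condition, and the nonlocal part, where the vanishing moments of $a^k_i$ up to order $s\ge\lfloor(1/p_--1)\ln b/\ln\lambda_-\rfloor$ yield pointwise decay like $[\rho(A^{-\ell^k_i}(\cdot-x^k_i))]^{-(s+1)\ln\lambda_-/\ln b-1}$. These two estimates combine with the variable anisotropic Fefferman--Stein vector-valued maximal inequality and the bounded overlap of $\{x^k_i+A^{j_0}B_{\ell^k_i}\}$ to reduce the level-set quasi-norm back to the atomic quantity
\[\Big\|\Big\{\sum_i\Big[\frac{\lambda^k_i\chi_{x^k_i+B_{\ell^k_i}}}{\|\chi_{x^k_i+B_{\ell^k_i}}\|_{L^{p(\cdot)}}}\Big]^{\underline{p}}\Big\}^{1/\underline{p}}\Big\|_{L^{p(\cdot)}}.\]
Integrating in $\lambda$ with the weight $\lambda^{r-1}\,d\lambda$ (or taking the supremum when $r=\infty$) and exchanging sums in $k$ using $2^{k_0}\sim\lambda$ then yields $\|f\|_{\mathcal{H}^{p(\cdot),r}_A}\lesssim\|f\|_{\mathcal{H}^{p(\cdot),q,s,r}_{A,\mathrm{atom}}}$.

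The main obstacle is the nonlocal part of the second estimate: one must pass the pointwise bound of $M_N$ on an atom through the variable $L^{p(\cdot)}$ quasi-norm and simultaneously handle the superposition in $i$ and $k$. This requires the anisotropic vector-valued maximal inequality on $L^{p(\cdot)}$ for $p(\cdot)\in C^{\log}$, and the condition $q>\max\{p_+,1\}$ is what allows the use of Minkowski and H\"older inequalities in the intermediate step so that the $L^q$ atomic size propagates to the $L^{p(\cdot),r}$ scale. Coordinating the parameters $(q,s,\underline p,\delta)$ so that the decay exponent beats the growth of $\|\chi_{x^k_i+B_{\ell^k_i}}\|_{L^{p(\cdot)}}^{-1}$ is the delicate bookkeeping that makes the argument work.
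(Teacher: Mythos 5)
The paper does not prove this lemma itself: it is cited verbatim from \cite[Theorem 4.8]{lyy17}, and you flag this yourself by offering only a template. With that understood, your two-direction outline does match the reference's strategy: a Calder\'on--Zygmund decomposition at heights $2^k$ with an anisotropic Whitney covering of $\Omega_k=\{M_Nf>2^k\}$ for one inclusion, and a layer-by-layer estimate of $M_N$ on atomic sums---local part via $L^q$-boundedness and the atom size condition, nonlocal part via vanishing moments together with the anisotropic vector-valued maximal inequality of Lemma~\ref{l3.5}---for the other, with the final integration in $\lambda$ and the role of $q>\max\{p_+,1\}$ also correctly identified.

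There is, however, one genuine imprecision in your forward direction. The bad parts $b^k_i=(f-P^k_i)\zeta^k_i$ produced at a single level $k$ by subtracting polynomials against the Whitney partition of unity are \emph{not} the atoms, and the double series $\sum_{k\in\zz}\sum_i b^k_i$ does not converge to $f$: each inner sum $\sum_i b^k_i$ already equals $f-g^k$, so the outer sum over $k$ is divergent. In Bownik's construction and in \cite{lyy17}, the atomic pieces arise only after telescoping the good parts, $f=\sum_k(g^{k+1}-g^k)=\sum_k\sum_i h^k_i$, where each $h^k_i$ is assembled from $b^k_i$, the finer partition of unity $\{\zeta^{k+1}_j\}$ at level $k+1$, and further polynomial corrections that restore both the compact support and the vanishing moments. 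Setting $a^k_i:=b^k_i/\lambda^k_i$ as you propose therefore does not produce a decomposition of $f$; this two-level bookkeeping is precisely where most of the technical work of the cited proof lives, so the sketch glosses over the part that actually requires care. The remainder of your outline, including the normalization $\lambda^k_i\sim 2^k\|\chi_{x^k_i+B_{\ell^k_i}}\|_{L^{p(\cdot)}}$ and the splitting at $2^{k_0}\sim\lambda$, is consistent with the reference.
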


By the proof of \cite[Theorem 4.8]{lyy17}, we obtain the following conclusion, which plays an important role in this section.
\begin{lemma}\label{l4.1xy}
 Let $p(\cdot)\in C^{\log}$, $r\in(0,\,\infty)$, $q \in (1,\,\infty)$ and $s\in[\lfloor(1/{p_-}-1) {\ln b/\ln \lambda_-}\rfloor,\,\infty)\cap\zz_+$ with $p_-$ as in \eqref{e2.5}. Then, for any $f\in \mathcal{H}^{p(\cdot),\,r}_{A}\cap L^q$, there exist $\{\lambda_i^k\}_{{i\in\nn,k\in\zz}}\subset\mathbb{C}$, dilated balls $\{x_i^k+B_{\ell_i^k}\}_{i\in\nn,k\in\zz}\subset\mathfrak{B}$ and
 $(p(\cdot),\,\infty,\,s)$-atoms $\{a_i^k\}_{{i\in\nn,k\in\zz}}$ such that
 $$f=\sum_{k\in\zz}\sum_{i\in\mathbb N}\lambda^k_ia^k_i \ {\rm in} \ L^q \ {\rm and} \ \mathcal{H}^{p(\cdot),\,r}_{A},$$
 where the series also converges almost everywhere.
\end{lemma}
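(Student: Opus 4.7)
The plan is to revisit the proof of Lemma~\ref{l3.1} (which is \cite[Theorem~4.8]{lyy17}) and track the extra convergence afforded by the hypothesis $f\in L^q$. Recall that the construction there is the anisotropic Calder\'on--Zygmund decomposition at each level $\lambda=2^k$: for $k\in\zz$ one sets $\Omega_k:=\{x\in\rn:M_N(f)(x)>2^k\}$, applies a Whitney-type decomposition to obtain dilated balls $\{x_i^k+B_{\ell_i^k}\}_{i\in\nn}$ with bounded overlap, and writes $f=g_k+b_k$, where $b_k=\sum_i b_i^k$ with $b_i^k$ supported essentially on $x_i^k+B_{\ell_i^k}$ and having vanishing moments up to order $s$, and where $g_k$ is the ``good part''. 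The atoms come from telescoping: $\lambda_i^k a_i^k$ records the contribution of the $i$-th piece at level $k$, and $\sum_{k=-K_1}^{K_2}\sum_i\lambda_i^ka_i^k=g_{K_2+1}-g_{-K_1}$.

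The first key step is to extract from the construction in \cite{lyy17} the uniform pointwise bound
\begin{equation*}
|g_k(x)|\le C M_N(f)(x)\quad\text{for a.e. }x\in\rn,\ k\in\zz,
\end{equation*}
together with $|g_k|\le C\,2^k$ a.e. On $\Omega_k^\complement$ this is immediate since $g_k=f$ there; on $\Omega_k$ the bound comes from the fact that $g_k$ is a Taylor-polynomial projection of $f$ on each Whitney ball and the ball lies at a controlled distance from $\Omega_k^\complement$, so its average can be compared with $M_N(f)$ at a nearby point outside $\Omega_k$.

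The second step is pointwise and $L^q$ convergence of $g_k$. Since $q\in(1,\fz)$ and (for the admissible choice of $N$) $M_N$ is bounded on $L^q$, we have $M_N(f)\in L^q$, hence $M_N(f)(x)<\fz$ a.e. For such $x$: if $k\to\fz$ then eventually $M_N(f)(x)\le 2^k$, so $x\in\Omega_k^\complement$ and $g_k(x)=f(x)$; if $k\to-\fz$, then $|g_k(x)|\le C\,2^k\to 0$. Combined with the dominating function $CM_N(f)\in L^q$, the dominated convergence theorem yields $g_k\to f$ in $L^q$ as $k\to\fz$ and $g_k\to 0$ in $L^q$ as $k\to-\fz$. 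Consequently the partial sums $g_{K_2+1}-g_{-K_1}$ converge to $f$ in $L^q$ and almost everywhere as $K_1,K_2\to\fz$, giving both the $L^q$-convergence assertion and the a.e. convergence.

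Finally, the $\mathcal{H}^{p(\cdot),r}_A$-convergence, as well as the $(p(\cdot),\fz,s)$-atom property of each $a_i^k$ and the relation $\lambda_i^k\sim 2^k\|\chi_{x_i^k+B_{\ell_i^k}}\|_{L^{p(\cdot)}}$, are already established inside the proof of \cite[Theorem~4.8]{lyy17}, so we only need to import these statements and combine them with the new $L^q$ and a.e. convergence proved above. The main obstacle is Step 1, i.e., making precise the uniform estimate $|g_k|\le CM_N(f)$ from the construction of the good part in the anisotropic variable-exponent setting; once this bound is available, Steps 2--3 are standard dominated-convergence arguments.
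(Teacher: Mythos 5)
Your proposal is correct and uses the same underlying mechanism as the paper's proof: a pointwise bound by $CM_N(f)$, the membership $M_N(f)\in L^q$, and the dominated convergence theorem. The only cosmetic difference is that the paper bounds $\sum_{k,i}|b_i^k(x)|\lesssim M_N(f)(x)$ directly (using $\supp b_i^k\subset\Omega_k$, $\|b_i^k\|_{L^\fz}\lesssim 2^k$, and the bounded overlap of the Whitney balls) and applies DCT to the partial sums of the series, whereas you phrase the argument via the good parts $g_k$ and the telescoping identity; both routes extract the same facts from the construction in \cite[Theorem 4.8]{lyy17} and are equally valid.
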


\begin{proof}
Let $f\in \mathcal{H}^{p(\cdot),\,r}_{A}\cap L^q$. For any $k\in\zz$, by the proof of \cite[Theorem 4.8]{lyy17}, we know that there exist
$$\lf\{x^k_i\r\}_{i\in\nn}\subset\Omega_k:=\lf\{x\in\rn:\ M_{N} f(x)>2^k\r\},\ \ \lf\{\ell^k_i\r\}_{i\in\nn,k\in\zz}\subset\zz,$$
a sequence of $(p(\cdot),\,\fz,\,s)$-atoms,  $\{a^k_i\}_{k\in\zz,i\in\nn}$, supported on $\{x^k_i+B_{\ell^k_i+4\sigma}\}_{k\in\zz,i\in\nn}$, respectively, and $\{\lambda^k_i\}_{ k\in\zz,i\in\nn}\subset\mathbb{C}$, such that
\begin{equation}\label{e5.1}
f=\sum_{k\in\mathbb{Z}}\sum_{i\in\mathbb N}\lambda^k_ia^k_i=:\sum_{k\in\mathbb{Z}}\sum_{i\in\mathbb N}b^k_i\ \ \rm{in} \ \ \cs',
\end{equation}
and for any $k\in\zz$ and $i\in\nn$, $\supp b^k_i\subset x^k_i+B_{\ell^k_i+4\sigma} \subset \Omega_k$,
\begin{equation}\label{e5.23}
\lf\|b^k_i\r\|_{L^\fz}\ls 2^k\ \ {\rm and}\ \ \sharp\lf\{j\in\nn:\  (x^k_i+B_{\ell^k_i+4\sigma})\cap(x^k_j+B_{\ell^k_j+4\sigma})\neq\emptyset\r\}\le R,
\end{equation}
where $R$ is as in \cite[Lemma 4.7]{lyy17}. Moreover, by $f\in \mathcal{H}^{p(\cdot),\,r}_{A}\cap L^q$, we have, for almost every $x\in\Omega_k$, there exists a $k(x)\in\zz$ such that $2^{k(x)}<M_{N} f(x)\le 2^{k(x)+1}$. From this, $\supp b^k_i\subset\Omega_k$ and \eqref{e5.23}, we deduce that, for a.e. $x\in\rn$,
\begin{align}\label{e5.24}
\sum_{k\in\zz}\sum_{i\in\nn}\lf|b^k_i(x)\r|
\sim&\sum_{k\in\zz,k\in(-\fz,k(x)]}\sum_{i\in\nn}\lf|b^k_i(x)\r|
\ls\sum_{k\in\zz,k\in(-\fz,k(x)]}\sum_{i\in\nn}2^{k}
\chi_{x^k_i+B_{\ell^k_i+4\sigma}}(x)\\\nonumber
\sim&\sum_{k\in(-\fz,k(x)]\cap\zz}2^k\sim M_{N} f(x).\nonumber
\end{align}
Therefore, there exists a subsequence of the series $\{\sum_{|k|<K}\sum_{i\in\zz}b^k_i\}_{K\in\nn}$, denoted still by itself without loss of generality, which converges to some measurable function $F$ almost everywhere in $\rn$.

It follows from \eqref{e5.24} that, for any $K\in\nn$ and a.e. $x\in\rn$,
\begin{align*}
\lf|F(x)-\sum_{|k|<K}b^k_i(x)\r|
&\ls|F(x)|+\sum_{k\in\zz,k\in(-\fz,k(x)]}\sum_{i\in\nn}\lf|b^k_i(x)\r|\\
&\ls|F(x)|+ M_{N} f(x)\ls M_{N} f(x).
\end{align*}
From this, the fact that $M_{N} (f)\in L^q$, and the dominated convergence theorem, we conclude that $F=\sum_{k\in\zz}\sum_{i\in\nn}b^k_i$ in $L^q$. By this and \eqref{e5.24}, we know $f=F\in L^q$ and hence
$$f=\sum_{k\in\zz}\sum_{i\in\nn}b^k_i\ \ {\rm in}\ \ L^q\ {\rm and} \ \mathcal{H}^{p(\cdot),\,r}_{A},$$
 and also almost everywhere.
\end{proof}

In what follows,
we also need the definition of {\it anisotropic Hardy-Littlewood maximal function} $\cm(f)$. For any $f\in L_{\mathrm{loc}}^1$ and $x\in \rn$,
\begin{align}\label{e2.9}
\cm(f)(x):=\sup_{x\in B\in\mathfrak{B}}\frac{1}{|B|}\int_{B}|f(z)|\,dz,
\end{align}
where $\mathfrak{B}$ is as in \eqref{e2.1}.

\begin{lemma}\rm{\cite[Lemma 4.3]{lyy17}}\label{l3.5}
Let $q\in(1,\,\infty)$. Assume that $p(\cdot)\in C^{\log}$ satisfies $1<p_-\leq p_+ <\infty$, where $p_-$ and $p_+$ are as in \eqref{e2.5}.
Then there exists a positive constant $C$ such that, for any sequence $\{f_k\}_{k\in\nn}$ of measurable functions,
$$\lf\|\lf\{\sum_{k\in\nn}\lf[\cm(f_k)\r]^q\r\}^{1/q}\r\|_{L^{p(\cdot)}}\leq C
\lf\|\lf(\sum_{k\in\nn}|f_k|^q\r)^{1/q}\r\|_{L^{p(\cdot)}},$$
where $\cm$ denotes the Hardy-Littlewood maximal operator as in \eqref{e2.9}.
\end{lemma}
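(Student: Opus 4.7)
The plan is to exploit the atomic decomposition of Lemma \ref{l3.1} and Lemma \ref{l4.1xy}, and then, for each dyadic level $k_0$ appearing in the Lorentz quasi-norm, split $f$ into a good and a bad part and estimate each using, respectively, the $L^q$-boundedness of $T$ and the $\delta$-type kernel cancellation.

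Fix $q\in(\max\{p_+,1\},\infty)$ large, and by a density argument take $f\in\mathcal{H}^{p(\cdot),r}_A\cap L^q$. Decompose $f=\sum_{k\in\zz}\sum_{i\in\nn}\lambda^k_i a^k_i$, where each $a^k_i$ is a $(p(\cdot),\infty,s)$-atom supported in $B^k_i:=x^k_i+B_{\ell^k_i}$, $\lambda^k_i\sim 2^k\|\chi_{B^k_i}\|_{L^{p(\cdot)}}$, convergence holding in $L^q$, $\mathcal{H}^{p(\cdot),r}_A$ and a.e., and $\{x^k_i+A^{j_0}B_{\ell^k_i}\}_i$ having bounded overlap at each $k$. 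Set $\Omega_k:=\bigcup_i B^k_i$, so that $\|f\|^r_{\mathcal{H}^{p(\cdot),r}_A}\sim\sum_k 2^{kr}\|\chi_{\Omega_k}\|^r_{L^{p(\cdot)}}$.

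For part (i), exploit the Lorentz-norm identity
$$\|T(f)\|^r_{L^{p(\cdot),r}}\sim \sum_{k_0\in\zz}2^{k_0 r}\,\|\chi_{\{|T(f)|>2^{k_0}\}}\|^r_{L^{p(\cdot)}},$$
and for each $k_0$ write $f=g^{k_0}+b^{k_0}$ with $g^{k_0}:=\sum_{k\le k_0}\sum_i \lambda^k_i a^k_i$, so that $\{|Tf|>2\cdot 2^{k_0}\}\subset\{|Tg^{k_0}|>2^{k_0}\}\cup\{|Tb^{k_0}|>2^{k_0}\}$. Since $\|a^k_i\|_\infty\le\|\chi_{B^k_i}\|_{L^{p(\cdot)}}^{-1}$ and the atomic supports overlap boundedly, one checks $\|g^{k_0}\|_\infty\lesssim 2^{k_0}$, so the $L^q$-boundedness of $T$ paired with a Chebyshev-type inequality in $L^{p(\cdot)}$ bounds $\|\chi_{\{|Tg^{k_0}|>2^{k_0}\}}\|_{L^{p(\cdot)}}$ by $\|\chi_{\Omega_{k_0}}\|_{L^{p(\cdot)}}$. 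For the bad part, the vanishing moment $\int a^k_i\,dx=0$ combined with the $\delta$-type kernel estimate yields, for $x$ outside the enlargement $\hat B^k_i:=x^k_i+A^{\sigma'}B_{\ell^k_i}$ (with $\sigma'$ chosen so that $\rho(x-x^k_i)>b^{2\sigma}\rho(y-x^k_i)$ for $y\in B^k_i$),
$$|T(a^k_i)(x)|\ls \|\chi_{B^k_i}\|_{L^{p(\cdot)}}^{-1}\,[\cm(\chi_{B^k_i})(x)]^{1+\delta}.$$
Splitting $\{|Tb^{k_0}|>2^{k_0}\}$ according to whether $x$ lies in $\bigcup_{k>k_0,i}\hat B^k_i$, the interior piece is absorbed by the atomic representation of $\|f\|_{\mathcal{H}^{p(\cdot),r}_A}$, while the exterior piece is handled by applying the anisotropic vector-valued Fefferman--Stein inequality (Lemma \ref{l3.5}) to $\{[\cm(\chi_{B^k_i})]^\tau\}$ at the rescaled exponent $p(\cdot)/\tau$ for some $\tau\in(1/(1+\delta),p_-)$; such a $\tau$ exists precisely because $p_->1/(1+\delta)$. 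Summing in $\ell^r$ over $k_0$ returns $\|f\|_{\mathcal{H}^{p(\cdot),r}_A}$.

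For part (ii), I note $\|T(f)\|_{\mathcal{H}^{p(\cdot),r}_A}=\|M_N(T(f))\|_{L^{p(\cdot),r}}$ and repeat the above scheme with $T$ replaced by $M_N\circ T$. The good-part argument carries over since $M_N$ is bounded on $L^q$ for $q>1$. For the bad part, the smoothness of the kernel shows the same pointwise tail bound
$$M_N(T(a^k_i))(x)\ls \|\chi_{B^k_i}\|_{L^{p(\cdot)}}^{-1}[\cm(\chi_{B^k_i})(x)]^{1+\delta},\qquad x\notin \hat B^k_i,$$
because convolving with $\varphi_k\in\cs_N$ preserves the $\delta$-regularity of $\mathcal{K}$. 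The rest of the argument is unchanged.

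The main obstacle I anticipate is packaging the tail sum into the correct $L^{p(\cdot)}$-estimate: since $p(\cdot)$ is variable and possibly below $1$, the $\ell^{\underline p}$ triangle inequality alone does not suffice, and one must invoke the Fefferman--Stein vector-valued maximal inequality at a rescaled exponent. The condition $p_->1/(1+\delta)$ provides exactly the numerical slack allowing the simultaneous choice $\tau(1+\delta)>1$ and $\tau<p_-$, and the delicate quantitative step is to verify that all resulting constants depend only on $p(\cdot)$ and not on $k_0$ or on the particular atomic data.
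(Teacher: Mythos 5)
Your write-up does not address Lemma \ref{l3.5} at all. The target statement is the anisotropic vector-valued Fefferman--Stein maximal inequality on $L^{p(\cdot)}$, namely that for $q\in(1,\infty)$ and $p(\cdot)\in C^{\log}$ with $1<p_-\le p_+<\infty$ one has $\|\{\sum_k[\cm(f_k)]^q\}^{1/q}\|_{L^{p(\cdot)}}\lesssim\|(\sum_k|f_k|^q)^{1/q}\|_{L^{p(\cdot)}}$. What you have outlined instead is a proof of Theorem \ref{t5.3x}, the $\mathcal{H}^{p(\cdot),r}_{A}$-to-$L^{p(\cdot),r}$ and $\mathcal{H}^{p(\cdot),r}_{A}$-to-$\mathcal{H}^{p(\cdot),r}_{A}$ boundedness of anisotropic convolutional $\delta$-type Calder\'on--Zygmund operators: the atomic decomposition from Lemma \ref{l4.1xy}, the dyadic splitting $f=g^{k_0}+b^{k_0}$, the $L^q$-boundedness of $T$, and the kernel cancellation estimate for $|T(a^k_i)(x)|$ are all ingredients of that theorem's proof, not of the maximal-inequality lemma. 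Worse, your argument explicitly \emph{invokes} Lemma \ref{l3.5} (``the exterior piece is handled by applying the anisotropic vector-valued Fefferman--Stein inequality (Lemma \ref{l3.5})''), so as a putative proof of Lemma \ref{l3.5} it would be circular.

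For the record, the paper itself offers no proof of Lemma \ref{l3.5}: it is imported verbatim as \cite[Lemma 4.3]{lyy17}. A genuine proof of this inequality would proceed along completely different lines from anything in your proposal. In the isotropic variable-exponent setting it is classically obtained either by Rubio de Francia extrapolation from the Muckenhoupt-weighted vector-valued maximal theorem, or (as in Cruz-Uribe--Fiorenza \cite{cf13} and Diening--Harjulehto--H\"ast\"o--R\r{u}\v{z}i\v{c}ka \cite{dhh11}) directly, using the log-H\"older continuity of $p(\cdot)$ to control the modular and reduce to the constant-exponent case. In the anisotropic setting one additionally needs that $(\rn,\rho,dx)$ is a space of homogeneous type so that $\cm$ as in \eqref{e2.9} is the Hardy--Littlewood maximal operator of a doubling metric measure space. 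None of that machinery appears in your proposal; you would need to start over with an entirely different argument, and the hypotheses $1<p_-\le p_+<\infty$, $q\in(1,\infty)$, $p(\cdot)\in C^{\log}$ --- not the Calder\'on--Zygmund kernel hypotheses you use --- are what must drive the proof.
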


\begin{lemma}\rm{\cite[Lemma 4.5]{lwyy19}}\label{l4.2xx}
 Let $p(\cdot)\in C^{\log}$, $r\in(0,\,\infty)$ and $q\in(\max\{p_+,\,1\},\,\infty)$. Then $\mathcal{H}^{p(\cdot),\,r}_{A}\cap L^q$ is dense in $\mathcal{H}^{p(\cdot),\,r}_{A}$.
\end{lemma}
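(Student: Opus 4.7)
The plan is to exhibit, for each $f\in\mathcal{H}^{p(\cdot),\,r}_{A}$, a sequence $\{f_N\}_{N\in\nn}\subset\mathcal{H}^{p(\cdot),\,r}_{A}\cap L^q$ with $f_N\to f$ in the Hardy-Lorentz quasi-norm. The natural candidate comes from truncating the atomic decomposition supplied by Lemma \ref{l3.1}.

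First, fix $s\in[\lfloor(1/p_--1)\ln b/\ln\lambda_-\rfloor,\fz)\cap\zz_+$. Applying Lemma \ref{l3.1} with the exponent $\infty$ in place of $q$, write
\begin{align*}
f=\sum_{k\in\zz}\sum_{i\in\nn}\lambda_i^k a_i^k\ \ \text{in}\ \cs',
\end{align*}
where the $a_i^k$ are $(p(\cdot),\,\infty,\,s)$-atoms supported on dilated balls $B_i^k$, with the corresponding atomic quasi-norm comparable to $\|f\|_{\mathcal{H}^{p(\cdot),\,r}_{A}}$. For each $N\in\nn$, set
\begin{align*}
f_N:=\sum_{|k|\le N}\sum_{i=1}^{N}\lambda_i^k a_i^k.
\end{align*}
Since each $(p(\cdot),\,\infty,\,s)$-atom is bounded with compact support, $f_N$ is a finite combination of $L^\infty$-functions with compact support; in particular $f_N\in L^q$ for the prescribed $q\in(1,\fz)$, and $f-f_N=\sum_{(k,i)\in I_N}\lambda_i^k a_i^k$ in $\cs'$, where $I_N$ denotes the set of index pairs $(k,i)$ with either $|k|>N$, or $|k|\le N$ and $i>N$.

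The heart of the proof is the convergence $\|f-f_N\|_{\mathcal{H}^{p(\cdot),\,r}_{A}}\to 0$. Since the tail $f-f_N$ is itself represented as an atomic series with the same atoms and coefficients, Lemma \ref{l3.1} applied in the reverse direction yields
\begin{align*}
\|f-f_N\|_{\mathcal{H}^{p(\cdot),\,r}_{A}}\ls\left[\sum_{k\in\zz}\left\|\left\{\sum_{i:(k,i)\in I_N}\left[\frac{\lambda_i^k\chi_{B_i^k}}{\|\chi_{B_i^k}\|_{L^{p(\cdot)}}}\right]^{\underline p}\right\}^{1/\underline p}\right\|_{L^{p(\cdot)}}^r\right]^{1/r}.
\end{align*}
The main obstacle will be to show the right-hand side tends to zero. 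I would use two nested applications of dominated convergence: for each fixed $k$, as $N\to\fz$ the inner truncated sum decreases pointwise to $0$ while being dominated by the full sum over $i\in\nn$, so the corresponding $L^{p(\cdot)}$-quasi-norm tends to $0$ (invoking Remark \ref{r2.1}(ii) to pass from pointwise to norm convergence); then the outer sum over $k\in\zz$ converges to $0$ by dominated convergence in $\ell^r(\zz)$, dominated term-by-term by the $r$-th powers of the inner $L^{p(\cdot)}$-quasi-norms of the full atomic decomposition, whose total sum is finite and comparable to $\|f\|_{\mathcal{H}^{p(\cdot),\,r}_{A}}^r$. This yields $f_N\to f$ in $\mathcal{H}^{p(\cdot),\,r}_{A}$ with $f_N\in L^q$, establishing density.
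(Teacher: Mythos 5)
The paper does not supply a proof of this lemma; it is cited directly from \cite[Lemma 4.5]{lwyy19}, so there is no in-paper argument to compare against.

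Your proposed proof — truncating the infinite atomic decomposition coming from Lemma \ref{l3.1} — is correct in outline, and it is not circular, because Lemma \ref{l3.1} (i.e.\ \cite[Theorem 4.8]{lyy17}) is established for every $f\in\mathcal{H}^{p(\cdot),r}_A$ directly via a Calder\'on--Zygmund decomposition of $M_Nf$, not via density. Two small points deserve sharper wording. First, Remark \ref{r2.1}(ii) goes the wrong way for what you need: it bounds the modular by the quasi-norm when the latter is $\le 1$, whereas your dominated-convergence step needs the converse (small modular implies small quasi-norm). That converse is true, but it uses $p_+<\infty$ in an essential way (via $\|f\|_{L^{p(\cdot)}}\le\max\{\varrho_{p(\cdot)}(f)^{1/p_-},\varrho_{p(\cdot)}(f)^{1/p_+}\}$), and you should also observe that the dominating function has finite modular, again because $p_+<\infty$. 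Second, when you pass from the series representation of $f$ in $\cs'$ to the claim $f-f_N=\sum_{(k,i)\in I_N}\lambda_i^k a_i^k$, you are implicitly using that the subseries over $I_N$ converges in $\mathcal{H}^{p(\cdot),r}_A$ (hence in $\cs'$) with quasi-norm controlled by the truncated atomic quantity; that is exactly the reconstruction half of Lemma \ref{l3.1}, applied to the subfamily, for which you should note that the required bounded-overlap and coefficient-size conditions are inherited by any subcollection of the original atoms.

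For comparison, the route more commonly taken in the anisotropic Hardy literature (and plausibly in \cite{lwyy19}) is the direct Calder\'on--Zygmund good-function approximation: write $f=g^k+b^k$ at height $2^k$, use the pointwise bound $M_N(g^k)\lesssim\min\{M_Nf,2^k\}$ to conclude $g^k\in L^\infty\cap\mathcal{H}^{p(\cdot),r}_A$ (hence $g^k\in L^q$ since $q>\max\{p_+,1\}$), and show $\|b^k\|_{\mathcal{H}^{p(\cdot),r}_A}\to0$ as $k\to\infty$. That argument is more self-contained because it does not invoke the full atomic decomposition theorem; your argument buys brevity at the price of leaning on Lemma \ref{l3.1} as a black box. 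Both are legitimate.
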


The following Lemma show that variable anisotropic Hardy-Lorentz space $\mathcal{H}^{p(\cdot),\,r}_{A}$ is complete. Its proof is similar to \cite[Lemma 3.9]{w21}, we only need to make some minor changes. To limit the length of this article, we omit the concrete details.
\begin{lemma}\label{l4.2xxx}
 Let $p(\cdot)\in C^{\log}$, $r\in(0,\,\infty)$. Then $\mathcal{H}^{p(\cdot),\,r}_{A}$ is complete.
\end{lemma}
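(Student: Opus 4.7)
The plan is to follow the standard recipe for completeness of quasi-Banach Hardy-type spaces: first establish a continuous embedding $\mathcal{H}^{p(\cdot),\,r}_{A}\hookrightarrow\cs'$, then extract a distributional limit from any Cauchy sequence, and finally invoke a Fatou-type argument in $L^{p(\cdot),\,r}$ to pass from pointwise liminf to norm convergence.

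For the embedding step, given $\varphi\in\cs$ I would choose $C(\varphi)>0$ so that $\tilde\varphi/C(\varphi)\in\cs_N$, which, via Definition \ref{d2.p}, yields the pointwise bound $|f*\tilde\varphi_k(x)|\leq C(\varphi)M_N(f)(x)$ for every $k\in\zz$ and $x\in\rn$. To convert this into an estimate on $|\langle f,\varphi\rangle|=|f*\tilde\varphi(0)|$ controlled by $\|f\|_{\mathcal{H}^{p(\cdot),\,r}_A}$, I would fix a dilated ball $B\in\mathfrak{B}$ containing the origin and combine the continuity of $f*\tilde\varphi$ with the $\underline{p}$-sub-additivity of Remark \ref{r2.1}(i) to reduce matters to estimating the average
\begin{align*}
\frac{1}{|B|}\int_B \lf[M_N(f)(x)\r]^{\underline{p}}\,dx,
\end{align*}
which in turn can be dominated via a H\"older-type pairing between the scale $L^{p(\cdot)/\underline{p},\,r/\underline{p}}$ and its associate space by $C_B\|f\|^{\underline{p}}_{\mathcal{H}^{p(\cdot),\,r}_{A}}$. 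Consequently any Cauchy sequence $\{f_j\}_{j\in\nn}\subset\mathcal{H}^{p(\cdot),\,r}_{A}$ is automatically Cauchy in $\cs'$, so $f_j\to f$ in $\cs'$ for some $f\in\cs'$.

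With this distributional limit in hand, for every $\psi\in\cs_N$ and every $k\in\zz$ the weak-$*$ convergence in $\cs'$ gives $f_j*\psi_k(x)\to f*\psi_k(x)$ pointwise, and taking the supremum over $\psi$ and $k$ yields $M_N(f)(x)\leq\liminf_{j\to\infty}M_N(f_j)(x)$ for all $x\in\rn$. The Fatou property of the variable Lorentz quasi-norm, which propagates from the Fatou property of $\|\cdot\|_{L^{p(\cdot)}}$ applied to the level-set integral that defines $\|\cdot\|_{L^{p(\cdot),\,r}}$, then yields
\begin{align*}
\|f\|_{\mathcal{H}^{p(\cdot),\,r}_{A}}\leq\liminf_{j\to\infty}\|f_j\|_{\mathcal{H}^{p(\cdot),\,r}_{A}}<\infty,
\end{align*}
so $f\in\mathcal{H}^{p(\cdot),\,r}_{A}$. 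Applying the same Fatou estimate to the difference $f_\ell-f_j\to f_\ell-f$ in $\cs'$ produces $\|f_\ell-f\|_{\mathcal{H}^{p(\cdot),\,r}_{A}}\leq\liminf_{j\to\infty}\|f_\ell-f_j\|_{\mathcal{H}^{p(\cdot),\,r}_{A}}\to 0$ as $\ell\to\infty$, which is exactly the required norm convergence and hence the completeness.

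The main obstacle is the first step: transferring the pointwise maximal-function bound into a quantitative bound for $|f*\tilde\varphi(0)|$ expressed solely through $\|M_N(f)\|_{L^{p(\cdot),\,r}}$. This requires careful bookkeeping with the $\underline{p}$-sub-additivity, the lack of translation invariance of $\|\cdot\|_{L^{p(\cdot)}}$, and a H\"older-type pairing in the variable Lorentz scale. By contrast, once the embedding into $\cs'$ is secured, the Fatou closure step is essentially formal, which is presumably why the author indicates that only minor adjustments to the weighted argument of \cite{w21} are needed.
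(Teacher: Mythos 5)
Your overall architecture---embed $\mathcal{H}^{p(\cdot),\,r}_A$ continuously into $\cs'$, extract a distributional limit from a Cauchy sequence, and close by a Fatou property of the quasi-norm---matches the standard argument that the paper delegates to \cite{w21}, and the Fatou closure step is correct as written: the pointwise inequality $M_N(f)\le\liminf_jM_N(f_j)$ follows from weak-$\ast$ convergence and the definition of $M_N$ as a supremum, and the Fatou property of $\|\cdot\|_{L^{p(\cdot),\,r}}$ passes to the Lorentz scale through the layer-cake form of the quasi-norm.

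However, the embedding step as you describe it contains a genuine gap. Specializing your bound $|f*\tilde\varphi_k(x)|\le C(\varphi)M_N(f)(x)$ to $x=0$ and $k=0$ only yields $|\langle f,\varphi\rangle|\le C(\varphi)\,M_N(f)(0)$, and a single pointwise value of $M_N(f)$ cannot be controlled by $\|M_N(f)\|_{L^{p(\cdot),\,r}}$. Appealing to ``continuity of $f*\tilde\varphi$'' does not repair this: $M_N(f)$ need not be continuous, and $f*\tilde\varphi(0)$ is already one fixed number; nor does the $\underline p$-subadditivity of Remark \ref{r2.1}(i) enter at this point. What is actually required is the translation trick on the test function: for $x$ in a fixed dilated ball $B\in\mathfrak{B}$ containing $0$, set $\psi^x(y):=\varphi(x-y)$, so that $\langle f,\varphi\rangle=(f*\psi^x)(x)$; the anisotropic Schwartz seminorms of $\psi^x$ are bounded uniformly for $x\in B$, hence there is a constant $C'(\varphi,B)$ with $|\langle f,\varphi\rangle|\le C'\,M_N(f)(x)$ for \emph{every} $x\in B$. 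With this in hand, the cleanest conclusion is not the $\underline p$-average/H\"older pairing you propose (which is delicate because of the second Lorentz index) but the direct weak-type observation: putting $\mu:=\inf_{x\in B}M_N(f)(x)$, one has $B\subset\{M_N(f)>\mu-\varepsilon\}$ for every $\varepsilon>0$, so $\mu\|\chi_B\|_{L^{p(\cdot)}}\le\|M_N(f)\|_{L^{p(\cdot),\,\infty}}\ls\|f\|_{\mathcal{H}^{p(\cdot),\,r}_A}$, and therefore $|\langle f,\varphi\rangle|\le C'(\varphi,B)\|\chi_B\|_{L^{p(\cdot)}}^{-1}\|f\|_{\mathcal{H}^{p(\cdot),\,r}_A}$, which is exactly the continuous embedding $\mathcal{H}^{p(\cdot),\,r}_A\hookrightarrow\cs'$ needed to start the Cauchy argument.
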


\begin{proof}[Proof of Theorem \ref{t5.3x}]

By the density, we only prove that (i) holds true for any $f\in \mathcal{H}^{p(\cdot),\,r}_{A}\cap L^q$  with $q\in(1,\,\infty)\cap(p_{+},\,\infty)$. For any $f\in \mathcal{H}^{p(\cdot),\,r}_{A}\cap L^q$, from Lemma \ref{l4.1xy}, we know that there exist numbers $\{\lambda_i^k\}_{i\in\mathbb{N},\,k\in\zz}\subset\mathbb{C}$ and a sequence of  $(p(\cdot),\,q,\,s)$-atom, $\{a_i^k\}_{i\in\mathbb{N},k\in\zz}$, supported, respectively, on $\{x_i^k+B_{\ell_i^k}\}_{i\in\nn,k\in\zz}\subset\mathfrak{B}$ such that
$$f=\sum_{k\in\zz}\sum_{i\in\nn} \lz_{i}^ka_i^k \ \ \mathrm{in} \ L^q$$
where
$\lambda^k_i\sim 2^k\|\chi_{x_i^k+B_{\ell_i^k}}\|_{L^{p(\cdot)}}$
for any $k\in\zz$ and $i\in\nn$,
$\sum_{i\in\nn} \chi_{x^k_i+A^{j_0}B_{\ell^k_i}}(x)\ls 1$ with some $j_0\in\zz\,\backslash\,\nn$
for any $x\in\rn$ and $k\in\zz$, and,
\begin{align}\label{e3.2}
\|f\|_{\mathcal{H}^{p(\cdot),\,q,\,s,\,r}_{A,\,\rm{atom}}}
\sim\lf[\sum_{k\in\zz}2^{kr}\lf\|\sum_{i\in\nn} \chi_{x_i^k+B_{\ell_i^k}}\r\|^r
_{L^{p(\cdot)}}\r]^{1/r}.
\end{align}
By the fact that  $T$ is bounded on $L^q$, we  have
$$T(f)=\sum_{k\in\zz}\sum_{i\in\nn} \lz_{i}^kT(a_i^k) \ \ \mathrm{in} \ L^q.$$
Set
$$f=\sum_{k=-\infty}^{k_0-1} \sum_{i\in\nn} \lz^k_ia^k_i+\sum_{k=k_0}^{\infty} \sum_{i\in\nn} \lz^k_ia^k_i
=:F_1+F_2 \ {\rm{in}} \ \ L^q \,.$$
 Then
\begin{align}\label{e4.3}
\lf\|\chi_{\{x\in\rn:\,T(f)(x)>2^{k_0}\}}\r\|_{L^{p(\cdot)}}
\ls&\lf\|\chi_{\{x\in\rn:\,T(F_1)(x)>2^{k_0-1}\}}\r\|_{L^{p(\cdot)}}
+\lf\|\chi_{\{x\in{E_{k_0}}:\,T(F_2)(x)>2^{k_0-1}\}}\r\|_{L^{p(\cdot)}} \\\nonumber
&+\lf\|\chi_{\{x\in(E_{k_0})^\complement:\,T(f_2)(x)>2^{k_0-1}\}}\r\|_{L^{p(\cdot)}} \nonumber\\
=&:{\rm{I_1+I_2+I_3}}\,, \nonumber
\end{align}
where $${E_{k_0}}:=\bigcup^\infty_{k={k_0}}\bigcup_{i\in\nn}\lf(x_i^k+A^\sigma B_{\ell_i^k}\r).$$ Therefore,
\begin{align}\label{e4.4}
{\rm{I_1}}&\ls \lf\|\chi_{\{x\in\rn:\,\sum_{k=-\infty}^{k_0-1} \sum_{i\in\nn} \lz^k_iT(a^k_i)(x)\chi_{x_i^k+A^\sigma B_{\ell_i^k}}(x)>2^{k_0-2}\}}\r\|_{L^{p(\cdot)}}\\
&\hs+\lf\|\chi_{\{x\in\rn:\,\sum_{k=-\infty}^{k_0-1} \sum_{i\in\nn} \lz^k_iT(a^k_i)(x)\chi_{(x_i^k+A^\sigma B_{\ell_i^k})^\complement}(x)>2^{k_0-2}\}}\r\|_{L^{p(\cdot)}}\nonumber \\
&=:{\rm{I_{1,1}+I_{1,2}}}\,. \nonumber
\end{align}
For the term ${\mathrm{I_{1,1}}}$, from the fact that $T$ is bounded on $L^q$,  Remark \ref{r2.1}, \eqref{e3.2} and a similar proof of  \cite[(4.7)]{lyy17}, we deduce  that
\begin{eqnarray}\label{e4.5}
\lf[\sum_{k\in\zz}2^{kr}\lf({\rm{I_{1,1}}}\r)^r\r]^{1/r}\ls
\lf[\sum_{k\in\zz}2^{kr}\lf\|\sum_{i\in\nn} \chi_{x_i^k+B_{\ell_i^k}}\r\|^r
_{L^{p(\cdot)}}\r]^{1/r}\sim
\|f\|_{\mathcal{H}^{p(\cdot),\,q,\,s,\,r}_{A,\,\rm{atom}}}.
\end{eqnarray}

For the term $\rm{I_{1,2}}$,  from the H\"{o}lder inequality and the size condition of $a_{i}^k(x)$, we conclude that, for any $x\in (x_i^k+A^{\sigma}B_{\ell_i^k})^{\complement}$,
\begin{align*}
\lf|Ta_{i}^k(x)\r|
&\leq \int_{x_i^k+B_{\ell_i^k+\sigma}}\lf|\mathcal{K}(x-y)-\mathcal{K}(x-x_i^k)\r||a_{i}^k(y)|\,dy\\\nonumber
&\lesssim\int_{x_i^k+B_{\ell_i^k+\sigma}}\frac{\rho(y-x_i^k)^{\delta}}
{\rho(x-x_i^k)^{1+\delta}}\lf|a_{i}^k(y)\r|\,dy
\lesssim \frac{\lf|x_{i}^ k+B_{\ell_i^k}\r|^{\delta}}{\rho(x-x_i^k)^{1+\delta}}\lf\|a_i^k\r\|_{L^q}
\lf|x_{i}^k+B_{\ell_i^k}\r|^{1/{q'}}\\ \nonumber
&\lesssim \frac{\lf|x_{i}^k+B_{\ell_i^k}\r|^{1+\delta}}{\rho(x-x_i^k)^{1+\delta}}
\frac{1}{\lf\|\chi_{x_i^k+B_{\ell_i^k}}\r\|_{L^{p(\cdot)}}}
\lesssim\lf[\cm(\chi_{x_i^k+B_{\ell_i^k}})(x)\r]^{1+\delta}
\frac{1}{\lf\|\chi_{x_i^k+B_{\ell_i^k}}\r\|_{L^{p(\cdot)}}}.\nonumber
\end{align*}
By this and a similar estimate of
 \cite[p.\,374]{lyy17}, we obtain
\begin{eqnarray*}
\lf[\sum_{k\in\zz}2^{kr}\lf({\rm{I_{1,2}}}\r)^r\r]^{1/r}\ls
\lf[\sum_{k\in\zz}2^{kr}\lf\|\sum_{i\in\nn} \chi_{x_i^k+B_{\ell_i^k}}\r\|^r
_{L^{p(\cdot)}}\r]^{1/r}\sim
\|f\|_{\mathcal{H}^{p(\cdot),\,q,\,s,\,r}_{A,\,\rm{atom}}}.
\end{eqnarray*}
Therefore, it follows from \eqref{e4.4} and \eqref{e4.5} that
\begin{eqnarray}\label{e4.14}
\lf[\sum_{k\in\zz}2^{kr}\lf({\rm{I_{1}}}\r)^r\r]^{1/r}\ls
\|f\|_{\mathcal{H}^{p(\cdot),\,q,\,s,\,r}_{A,\,\rm{atom}}}.
\end{eqnarray}

For ${\rm{I_2}}$ and ${\rm{I_{3}}}$, by  a proof similar to those of
\cite[(4.12) and (4.13)]{lyy17}, we obtain
\begin{eqnarray}\label{e4.15}
\lf[\sum_{k\in\zz}2^{kr}\lf({\rm{I_2}}\r)^r\r]^{1/r}\ls
\|f\|_{\mathcal{H}^{p(\cdot),\,q,\,s,\,r}_{A,\,\rm{atom}}}
\ {\rm{and}} \
\lf[\sum_{k\in\zz}2^{kr}\lf({\rm{I_3}}\r)^r\r]^{1/r}\ls
\|f\|_{\mathcal{H}^{p(\cdot),\,q,\,s,\,r}_{A,\,\rm{atom}}}.
\end{eqnarray}

Combining the estimate  of \eqref{e4.3}, \eqref{e4.14} and \eqref{e4.15}, we obtain
\begin{align*}
\|T(f)\|_{L^{p(\cdot),\,r}}
&\sim\lf[\sum_{k\in\zz}2^{kr}\lf\|\chi_{\{x\in\rn:\,|Tf(x)|>2^{k}\}}\r\|^r
_{L^{p(\cdot)}}\r]^{1/r}\\
&\ls\lf[\sum_{k\in\zz}2^{kr}\lf({\rm{I_1}}\r)^r\r]^{1/r}+
\lf[\sum_{k\in\zz}2^{kr}\lf({\rm{I_2}}\r)^r\r]^{1/r}+
\lf[\sum_{k\in\zz}2^{kr}\lf({\rm{I_3}}\r)^r\r]^{1/r}\\
&\ls\|f\|_{\mathcal{H}^{p(\cdot),\,q,\,s,\,r}_{A,\,\rm{atom}}}\sim\|f\|_{\mathcal{H}^{p(\cdot),\,r}_{A}},
\end{align*}
 which implies that $T(f)\in L^{p(\cdot),\,r}$. This finishes the proof of Theorem \ref{t5.3x}(i).

Now we show Theorem \ref{t5.3x}(ii). By Lemma \ref{l4.2xx}, we only need to prove that (ii) holds true for any $f\in \mathcal{H}^{p(\cdot),\,r}_{A}\cap L^q$  with $q\in(1,\,\infty)\cap(p_{+},\,\infty)$. Let $f\in \mathcal{H}^{p(\cdot),\,r}_{A}\cap L^q$. From Lemma \ref{l4.1xy}, we know that there exist numbers $\{\lambda_i^k\}_{i\in\mathbb{N},\,k\in\zz}\subset\mathbb{C}$ and a sequence of  $(p(\cdot),\,q,\,s)$-atom, $\{a_i^k\}_{i\in\mathbb{N},k\in\zz}$, supported, respectively, on $\{x_i^k+B_{\ell_i^k}\}_{i\in\nn,k\in\zz}\subset\mathfrak{B}$ such that
$$f=\sum_{k\in\zz}\sum_{i\in\nn} \lz_{i}^ka_i^k \ \ \mathrm{in} \ L^q$$
where
$\lambda^k_i\sim 2^k\|\chi_{x_i^k+B_{\ell_i^k}}\|_{L^{p(\cdot)}}$
for any $k\in\zz$ and $i\in\nn$,
$\sum_{i\in\nn} \chi_{x^k_i+A^{j_0}B_{\ell^k_i}}(x)\ls 1$ with some $j_0\in\zz\,\backslash\,\nn$
for any $x\in\rn$ and $k\in\zz$, and,
\begin{align}\label{e3.2c}
\|f\|_{\mathcal{H}^{p(\cdot),\,q,\,s,\,r}_{A,\,\rm{atom}}}
\sim\lf[\sum_{k\in\zz}2^{kr}\lf\|\sum_{i\in\nn} \chi_{x_i^k+B_{\ell_i^k}}\r\|^r
_{L^{p(\cdot)}}\r]^{1/r}.
\end{align}
By the fact that  $T$ is bounded on $L^q$, we  have
$$T(f)=\sum_{k\in\zz}\sum_{i\in\nn} \lz_{i}^kT(a_i^k) \ \ \mathrm{in} \ L^q.$$
Set
$$f=\sum_{k=-\infty}^{k_0-1} \sum_{i\in\nn} \lz^k_ia^k_i+\sum_{k=k_0}^{\infty} \sum_{i\in\nn} \lz^k_ia^k_i
=:F_1+F_2 \ {\rm{in}} \ \ L^q \,.$$
 Then
\begin{align}\label{e4.3c}
&\lf\|\chi_{\lf\{x\in\rn:\,M_N(T(f))(x)>2^{k_0}\r\}}\r\|_{L^{p(\cdot)}}\\ \nonumber
\ls&\lf\|\chi_{\lf\{x\in\rn:\,M_N(T(F_1))(x)>2^{k_0-1}\r\}}\r\|_{L^{p(\cdot)}}
+\lf\|\chi_{\lf\{x\in{G_{k_0}}:\,M_N(T(F_2))(x)>2^{k_0-1}\r\}}\r\|_{L^{p(\cdot)}} \\ \nonumber
&+\lf\|\chi_{\lf\{x\in(G_{k_0})^\complement:\,M_N(T(f_2))(x)>2^{k_0-1}\r\}}\r\|_{L^{p(\cdot)}} \nonumber\\
=&:{\rm{J_1+J_2+J_3}}\,, \nonumber
\end{align}
where $M_N$ is as in Definition \ref{d2.p} and $${G_{k_0}}:=\bigcup^\infty_{k={k_0}}\bigcup_{i\in\nn}\lf(x_i^k+A^\sigma B_{\ell_i^k}\r).$$ Therefore,
\begin{align}\label{e4.4c}
{\rm{J_1}}&\ls \lf\|\chi_{\{x\in\rn:\,\sum_{k=-\infty}^{k_0-1} \sum_{i\in\nn} \lz^k_iM_N(T(a^k_i))(x)\chi_{x_i^k+A^\sigma B_{\ell_i^k}}(x)>2^{k_0-2}\}}\r\|_{L^{p(\cdot)}}\\
&\hs+\lf\|\chi_{\{x\in\rn:\,\sum_{k=-\infty}^{k_0-1} \sum_{i\in\nn} \lz^k_iM_N(T(a^k_i))(x)\chi_{(x_i^k+A^\sigma B_{\ell_i^k})^\complement}(x)>2^{k_0-2}\}}\r\|_{L^{p(\cdot)}}\nonumber \\
&=:{\rm{J_{1,1}+J_{1,2}}}\,. \nonumber
\end{align}
For the term ${\mathrm{J_{1,1}}}$, from the fact that $M_N$ and $T$ are bounded on $L^q$,  Remark \ref{r2.1}, \eqref{e3.2c} and a similar proof of  \cite[(4.7)]{lyy17}, we deduce  that
\begin{eqnarray}\label{e4.5c}
\lf[\sum_{k\in\zz}2^{kr}\lf({\rm{J_{1,1}}}\r)^r\r]^{1/r}\ls
\lf[\sum_{k\in\zz}2^{kr}\lf\|\sum_{i\in\nn} \chi_{x_i^k+B_{\ell_i^k}}\r\|^r
_{L^{p(\cdot)}}\r]^{1/r}\sim
\|f\|_{\mathcal{H}^{p(\cdot),\,q,\,s,\,r}_{A,\,\rm{atom}}}.
\end{eqnarray}

For the term $\rm{J_{1,2}}$,  from the H\"{o}lder inequality, the size condition of $a_{i}^k(x)$, and a similar proof of \cite[p.117, Lemma]{s93}, we conclude that, for any $x\in (x_i^k+A^{\sigma}B_{\ell_i^k})^{\complement}$,
\begin{align*}
M_N(Ta_{i}^k)(x)
&=\sup_{\varphi\in\mathcal{S}_N}\sup_{j\in\zz}\lf|(\varphi_j*Ta_{i}^k)(x)\r|
=\sup_{\varphi\in\mathcal{S}_N}\sup_{j\in\zz}\lf|(\varphi_j*\mathcal{K}*a_{i}^k)(x)\r|\\
&\leq\sup_{\varphi\in\mathcal{S}_N}\sup_{j\in\zz} \int_{x_i^k+B_{\ell_i^k+\sigma}}
\lf|(\varphi_j*\mathcal{K})(x-y)-(\varphi_j*\mathcal{K})(x-x_i^k)\r|\lf|a_{i}^k(y)\r|\,dy\\\nonumber
&\lesssim\int_{x_i^k+B_{\ell_i^k+\sigma}}\frac{\rho(y-x_i^k)^{\delta}}
{\rho(x-x_i^k)^{1+\delta}}\lf|a_{i}^k(y)\r|\,dy
\lesssim \frac{\lf|x_{i}^ k+B_{\ell_i^k}\r|^{\delta}}{\rho(x-x_i^k)^{1+\delta}}\lf\|a_i^k\r\|_{L^q}
\lf|x_{i}^k+B_{\ell_i^k}\r|^{1/{q'}}\\\nonumber
&\lesssim \frac{\lf|x_{i}^k+B_{\ell_i^k}\r|^{1+\delta}}{\rho(x-x_i^k)^{1+\delta}}
\frac{1}{\lf\|\chi_{x_i^k+B_{\ell_i^k}}\r\|_{L^{p(\cdot)}}}
\lesssim\lf[\cm(\chi_{x_i^k+B_{\ell_i^k}})(x)\r]^{1+\delta}
\frac{1}{\lf\|\chi_{x_i^k+B_{\ell_i^k}}\r\|_{L^{p(\cdot)}}}.\nonumber
\end{align*}
From this and a similar estimate of
 \cite[p.\,374]{lyy17}, we deduce that
\begin{eqnarray*}
\lf[\sum_{k\in\zz}2^{kr}\lf({\rm{J_{1,2}}}\r)^r\r]^{1/r}\ls
\lf[\sum_{k\in\zz}2^{kr}\lf\|\sum_{i\in\nn} \chi_{x_i^k+B_{\ell_i^k}}\r\|^r
_{L^{p(\cdot)}}\r]^{1/r}\sim
\|f\|_{\mathcal{H}^{p(\cdot),\,q,\,s,\,r}_{A,\,\rm{atom}}}.
\end{eqnarray*}
Therefore, it follows from \eqref{e4.4c} and \eqref{e4.5c} that
\begin{eqnarray}\label{e4.14c}
\lf[\sum_{k\in\zz}2^{kr}\lf({\rm{J_{1}}}\r)^r\r]^{1/r}\ls
\|f\|_{\mathcal{H}^{p(\cdot),\,q,\,s,\,r}_{A,\,\rm{atom}}}.
\end{eqnarray}

For ${\rm{J_2}}$ and ${\rm{J_{3}}}$, by  a proof similar to those of
\cite[(4.12) and (4.13)]{lyy17}, we obtain
\begin{eqnarray}\label{e4.15c}
\lf[\sum_{k\in\zz}2^{kr}\lf({\rm{J_2}}\r)^r\r]^{1/r}\ls
\|f\|_{\mathcal{H}^{p(\cdot),\,q,\,s,\,r}_{A,\,\rm{atom}}}
\ {\rm{and}} \
\lf[\sum_{k\in\zz}2^{kr}\lf({\rm{J_3}}\r)^r\r]^{1/r}\ls
\|f\|_{\mathcal{H}^{p(\cdot),\,q,\,s,\,r}_{A,\,\rm{atom}}}.
\end{eqnarray}

It follows from the estimates  of \eqref{e4.3c}, \eqref{e4.14c} and \eqref{e4.15c} that
\begin{align*}
\|T(f)\|_{\mathcal{H}_A^{p(\cdot),\,r}}&=\lf\|M_N(T(f))\r\|_{L^{p(\cdot),\,r}}\\
&\sim\lf[\sum_{k\in\zz}2^{kr}\lf\|\chi_{\{x\in\rn:\,|M_N(Tf)(x)|>2^{k}\}}\r\|^r
_{L^{p(\cdot)}}\r]^{1/r}\\
&\ls\lf[\sum_{k\in\zz}2^{kr}\lf({\rm{J_1}}\r)^r\r]^{1/r}+
\lf[\sum_{k\in\zz}2^{kr}\lf({\rm{J_2}}\r)^r\r]^{1/r}+
\lf[\sum_{k\in\zz}2^{kr}\lf({\rm{J_3}}\r)^r\r]^{1/r}\\
&\ls\|f\|_{\mathcal{H}^{p(\cdot),\,q,\,s,\,r}_{A,\,\rm{atom}}}\sim\|f\|_{\mathcal{H}^{p(\cdot),\,r}_{A}},
\end{align*}
 which implies that $T(f)\in \mathcal{H}_A^{p(\cdot),\,r}$.
Therefore, we complete the proof of Theorem \ref{t5.3x}.
\end{proof}

\section{ The dual space of variable anisotropic Hardy-Lorentz space $\mathcal{H}_{A}^{p(\cdot),\,r}$}\label{s4}
\hskip\parindent
In this section, we establish the dual space of $\mathcal{H}_{A}^{p(\cdot),\,r}$. More precisely, we prove that the dual space of $\mathcal{H}_{A}^{p(\cdot),\,r}$ is the variable anisotropic BMO-type space $\mathcal{BMO}_A^ {p(\cdot),\,q,\,s}$.

Now, we define two new variable anisotropic BMO-type space as follows. In this article, for any $m\in\mathbb{Z}_+$, we use $P_m$ to denote the set of polynomials on $\rn$ with order
not more than $m$.
 For any $B\in\mathfrak{B}$ and any
locally integrable function $g$ on $\rn$, we use $P^m_B(g)$ to denote the minimizing polynomial of $g$ with degree not greater than $m$, which means that $P^m_
B(g)$ is the unique
polynomial $f\in P_m$ such that, for any $h\in P_m$,
$$\int_{B}h(x)(g(x)-f(x))\,dx=0.$$

\begin{definition}\label{d4.1}
 Let $A$ be a given dilation, $p(\cdot)\in\mathcal{P}$, $s$ be a nonnegative integer and $q\in[1,\,\infty)$. Then {\it the variable anisotropic BMO-type space } $\mathcal{BMO}_A^{p(\cdot),\,q,\,s}$ is defined to be the set of all $f\in L^q_{\mathrm{loc}}$ such that
$$ \|f\|_{\mathcal{BMO}_A^{p(\cdot),\,q,\,s}}:= \sup_{B\in\mathfrak{B}}\inf_{P\in P_s}\frac{|B|^{1-1/q}}{\|\chi_B\|_{L^{p(\cdot)}}}
 \lf[\int_B|f(x)-P(x)|^q\,dx\r]^{1/q}<\infty$$
 where $\mathfrak{B}$ is as in \eqref{e2.1}.
\end{definition}
\begin{definition}\label{d4.1x}
 Let $A$ be a given dilation, $p(\cdot)\in\mathcal{P}$, $s$ be a nonnegative integer and $q\in[1,\,\infty)$. Then {\it the variable anisotropic BMO-type space} $\widetilde{\mathcal{BMO}}_A^{p(\cdot),\,q,\,s}$ is defined to be the set of all $f\in L^q_{\mathrm{loc}}$ such that
$$ \|f\|_{\widetilde{\mathcal{BMO}}_A^{p(\cdot),\,q,\,s}}:= \sup_{B\in\mathfrak{B}}\frac{|B|^{1-1/q}}{\|\chi_B\|_{L^{p(\cdot)}}}
 \lf[\int_B|f(x)-P_B^s(f)(x)|^q\,dx\r]^{1/q}<\infty$$
 where $\mathfrak{B}$ is as in \eqref{e2.1}.
\end{definition}
\begin{lemma}\rm{\cite[(8.9)]{b03}}\label{l5.20x}
 Let $q\in[1,\,\infty]$, $A$ be a given dilation, $f\in L^q_{\mathrm{loc}}$ and $s$ be a nonnegative integer and $B\in\mathfrak{B}$. Then
 there exists a positive constant $C$, independent of $f$ and $B$, such that
 $$\sup_{x\in B}\lf|P_B^s(f)(x)\r|\leq C\frac{\int_B|f(x)|\,dx}{|B|}.$$
\end{lemma}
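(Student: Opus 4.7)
The plan is to reduce the estimate to a uniform bound on the fixed ``unit'' ball $B_0=\Delta$ by an affine change of variables, and then exploit the finite dimension of $P_s$. Given $B=x_0+B_k\in\mathfrak{B}$, define the affine map $\Phi_B\colon B_0\to B$ by $\Phi_B(y):=x_0+A^ky$. Because $A$ is linear, $\Phi_B$ is a bijection from $B_0$ onto $B$, its Jacobian has absolute value $b^k=|B|$, and, crucially, composition with $\Phi_B$ is a linear bijection of $P_s$ onto itself (an affine change of variables preserves the polynomial degree). This dilation-scaling structure is exactly the reason the constant $C$ can be made independent of $f$ and $B$.

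Next, I would show that the projection is equivariant under $\Phi_B$: namely, if $P_B^s(f)$ is the minimizing polynomial of $f$ on $B$, then $P_B^s(f)\circ\Phi_B$ is the minimizing polynomial of $f\circ\Phi_B$ on $B_0$. This is a direct change-of-variable computation in the defining identity
\[
\int_B h(x)\bigl(f(x)-P_B^s(f)(x)\bigr)\,dx=0\qquad\forall\,h\in P_s,
\]
using that $h\mapsto h\circ\Phi_B$ ranges over all of $P_s$ as $h$ does. Consequently,
\[
\sup_{x\in B}\bigl|P_B^s(f)(x)\bigr|
=\sup_{y\in B_0}\bigl|P_{B_0}^s(f\circ\Phi_B)(y)\bigr|,
\]
and after the same change of variables
\[
\int_{B_0}|f\circ\Phi_B(y)|\,dy=\frac{1}{|B|}\int_B|f(x)|\,dx.
\]
So it suffices to prove a \emph{single} estimate on $B_0$: there is a constant $C=C(s,n,A)$ such that for every $g\in L^1(B_0)$,
\[
\sup_{y\in B_0}\bigl|P_{B_0}^s(g)(y)\bigr|\le C\int_{B_0}|g(y)|\,dy.
\]

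For this reduced statement, I would equip the finite-dimensional space $P_s$ with the inner product $\langle p,q\rangle_{B_0}:=\int_{B_0}p(y)q(y)\,dy$ and pick an orthonormal basis $\{\phi_\alpha\}_{|\alpha|\le s}$. Then the defining property of $P_{B_0}^s$ gives the explicit formula
\[
P_{B_0}^s(g)(y)=\sum_{|\alpha|\le s}\phi_\alpha(y)\int_{B_0}\phi_\alpha(z)g(z)\,dz,
\]
so
\[
\sup_{y\in B_0}\bigl|P_{B_0}^s(g)(y)\bigr|\le\Bigl(\sum_{|\alpha|\le s}\|\phi_\alpha\|_{L^\infty(B_0)}^2\Bigr)\,\|g\|_{L^1(B_0)},
\]
and the bracketed quantity is a finite constant depending only on $s$, $n$, and the fixed set $B_0$. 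Combining this with the change-of-variables identities above yields the claim with a constant independent of $f$ and $B$. The only mildly delicate point in the plan is verifying the equivariance $P_{B_0}^s(f\circ\Phi_B)=P_B^s(f)\circ\Phi_B$, but this follows immediately from the uniqueness of the minimizing polynomial together with the fact that $h\mapsto h\circ\Phi_B$ is a bijection of $P_s$; no estimates of Bownik-type regularity of $A$ are actually required.
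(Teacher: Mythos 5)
Your proof is correct, and since the paper does not prove this lemma itself but simply cites it from Bownik's memoir \cite[(8.9)]{b03}, there is no in-paper argument to compare against; your rescaling-to-$B_0$-plus-finite-dimensionality argument is exactly the standard one used there. The two key verifications (that $h\mapsto h\circ\Phi_B$ is a bijection of $P_s$, hence $P_{B_0}^s(f\circ\Phi_B)=P_B^s(f)\circ\Phi_B$ by uniqueness, and that the orthonormal-basis formula gives a uniform $L^1(B_0)\to L^\infty(B_0)$ bound for the fixed-ball projection) are both carried out correctly, so nothing is missing.
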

\begin{lemma}\label{l5.2x}
 Let $A$ be a given dilation, $p(\cdot)\in\mathcal{P}$, $s$ be a nonnegative integer and $q\in[1,\,\infty)$. Then
 $$\mathcal{BMO}_A^{p(\cdot),\,q,\,s}=
 \widetilde{\mathcal{BMO}}_A^{p(\cdot),\,q,\,s}$$
 with equivalent quasi-norms.
\end{lemma}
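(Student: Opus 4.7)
The plan is to prove both inclusions of the equality $\mathcal{BMO}_A^{p(\cdot),\,q,\,s}=\widetilde{\mathcal{BMO}}_A^{p(\cdot),\,q,\,s}$ with comparable quasi-norms, using the fact that $P_B^s(f)$ is one admissible choice in the infimum defining $\|\cdot\|_{\mathcal{BMO}_A^{p(\cdot),\,q,\,s}}$, and conversely that any minimizer in $P_s$ differs from $P_B^s(f)$ by something controlled through Lemma \ref{l5.20x}.

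First, the inclusion $\widetilde{\mathcal{BMO}}_A^{p(\cdot),\,q,\,s}\subset \mathcal{BMO}_A^{p(\cdot),\,q,\,s}$ with $\|f\|_{\mathcal{BMO}_A^{p(\cdot),\,q,\,s}}\le \|f\|_{\widetilde{\mathcal{BMO}}_A^{p(\cdot),\,q,\,s}}$ is immediate: for any $B\in\mathfrak{B}$, the polynomial $P=P_B^s(f)\in P_s$ is admissible in the infimum, so the infimum is no larger than the integral with this specific choice.

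For the reverse direction, fix any $f\in \mathcal{BMO}_A^{p(\cdot),\,q,\,s}$, any $B\in\mathfrak{B}$, and any $P\in P_s$. The key observation is that $P_B^s$ is linear on locally integrable functions and reproduces polynomials in $P_s$, hence $P_B^s(f)-P=P_B^s(f-P)$. Applying the triangle inequality,
\begin{align*}
\lf[\int_B|f(x)-P_B^s(f)(x)|^q\,dx\r]^{1/q}
&\le \lf[\int_B|f(x)-P(x)|^q\,dx\r]^{1/q}\\
&\quad+\lf[\int_B|P_B^s(f-P)(x)|^q\,dx\r]^{1/q}.
\end{align*}
For the second term I would apply Lemma \ref{l5.20x} to $g:=f-P$, together with H\"older's inequality, obtaining
$$\sup_{x\in B}|P_B^s(f-P)(x)|\ls \frac{1}{|B|}\int_B|f(x)-P(x)|\,dx \ls |B|^{-1/q}\lf[\int_B|f(x)-P(x)|^q\,dx\r]^{1/q},$$
so integrating over $B$ yields $\|P_B^s(f-P)\|_{L^q(B)}\ls \|f-P\|_{L^q(B)}$. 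Summing up, I get
$$\lf[\int_B|f(x)-P_B^s(f)(x)|^q\,dx\r]^{1/q}\ls \lf[\int_B|f(x)-P(x)|^q\,dx\r]^{1/q}.$$

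Multiplying through by $|B|^{1-1/q}/\|\chi_B\|_{L^{p(\cdot)}}$, taking the infimum over $P\in P_s$ on the right-hand side, and then the supremum over $B\in\mathfrak{B}$ on both sides, yields $\|f\|_{\widetilde{\mathcal{BMO}}_A^{p(\cdot),\,q,\,s}}\ls \|f\|_{\mathcal{BMO}_A^{p(\cdot),\,q,\,s}}$, which completes the proof. There is no serious obstacle; the only point requiring care is the linearity $P_B^s(f)-P=P_B^s(f-P)$, which follows from the fact that $P_B^s$ is the orthogonal projection (in the $L^2(B,dx)$ sense on the finite-dimensional space $P_s$) onto $P_s$, hence fixes $P\in P_s$.
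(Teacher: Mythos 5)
Your proof is correct and follows essentially the same route as the paper's: both directions rest on the fact that the infimum is no larger than the particular choice $P=P_B^s(f)$ (easy inclusion), and for the reverse inclusion both you and the paper combine the pointwise bound of Lemma \ref{l5.20x} applied to $f-P$ (or equivalently $Q-f$), H\"older's inequality, and the triangle/Minkowski inequality, using the projection identity $P_B^s(f)-P=P_B^s(f-P)$. The paper writes the decomposition as $P_B^s(f)-f = -(P_B^s(Q-f)+f-Q)$ and estimates $\|P_B^s(Q-f)\|_{L^q(B)}$ first, but this is just a cosmetic rearrangement of your argument.
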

\begin{proof}
By the above definition, it is easy to see that $$\mathcal{BMO}_A^{p(\cdot),\,q,\,s}\supseteq
 \widetilde{\mathcal{BMO}}_A^{p(\cdot),\,q,\,s}.$$
 Conversely, from Lemma \ref{l5.20x} and the H\"{o}lder inequality, we conclude that, for any
 $B\in\mathfrak{B}$, $Q\in P_s$,
 \begin{align*}
 \lf[\frac{1}{|B|}\int_B|P_B^s(Q-f)(x)|^q\,dx\r]^{1/q}
 \lesssim&\frac{1}{|B|}
 \int_B|Q(x)-f(x)|\,dx\\
 \lesssim&
 \lf[\frac{1}{|B|}\int_B|Q(x)-f(x)|^q\,dx\r]^{1/q}.
 \end{align*}
 Therefore, by the Minkowski inequality, we obtain
 \begin{align*}
 &\frac{|B|}{\|\chi_B\|_{L^{p(\cdot)}}}\lf[\frac{1}{|B|}\int_B|P_B^s(f)(x)-f(x)|
 ^q\,dx\r]^{1/q}\\
 =&\frac{|B|}{\|\chi_B\|_{L^{p(\cdot)}}}
 \lf[\frac{1}{|B|}\int_B|P_B^s(Q-f)(x)+f(x)-Q(x)|^q\,dx\r]^{1/q}\\
 \lesssim&\frac{|B|}{\|\chi_B\|_{L^{p(\cdot)}}}
 \lf[\frac{1}{|B|}\int_B|Q(x)-f(x)|^q\,dx\r]^{1/q},
 \end{align*}
 which implies that
 $$\mathcal{BMO}_A^{p(\cdot),\,q,\,s}\subseteq
 \widetilde{\mathcal{BMO}}_A^{p(\cdot),\,q,\,s}.$$
 This completes the proof of Lemma \ref{l5.2x}.
\end{proof}

\begin{lemma}\label{l5.2}
Let $A$ be a given dilation, $p(\cdot)\in C^{\mathrm{log}}$, $r\in(0,\,1]$, $s$ be a nonnegative integer and $q\in[1,\,\infty)$. Then, for any
 continuous linear functional $\cl$ on $\mathcal{H}^{p(\cdot),\,r}_{A}=\mathcal{H}^{p(\cdot),\,q,\,s,\,r}_{A,\,\rm{atom}}$,
\begin{align*} \|\cl\|_{(\mathcal{H}^{p(\cdot),\,q,\,s,\,r}_{A,\,\rm{atom}})^{*}}:=&\sup\lf\{|\cl(f)|:
\|f\|_{\mathcal{H}^{p(\cdot),\,q,\,s,\,r}_{A,\,\rm{atom}}}\leq1\r\}\\=&\sup\{|\cl(a)|
: a \ \mathrm{is} \ (p(\cdot),\,q,\,s)\mathrm{-atom}\},
\end{align*}
where $(\mathcal{H}^{p(\cdot),\,q,\,s,\,r}_{A,\,\rm{atom}})^{*}$ denotes the dual space of $\mathcal{H}^{p(\cdot),\,q,\,s,\,r}_{A,\,\rm{atom}}$.
\end{lemma}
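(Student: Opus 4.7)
The plan is to establish the two matching inequalities
$$\sup\{|\mathcal{L}(a)| : a\ \text{is a}\ (p(\cdot),q,s)\text{-atom}\} \leq C\,\|\mathcal{L}\|_{(\mathcal{H}^{p(\cdot),q,s,r}_{A,\mathrm{atom}})^{*}}$$
and the reverse one (with a constant that can ultimately be absorbed into the equivalence of quasi-norms), and to use the identification $\mathcal{H}_A^{p(\cdot),r}=\mathcal{H}^{p(\cdot),q,s,r}_{A,\mathrm{atom}}$ from Lemma \ref{l3.1} together with the completeness of $\mathcal{H}_A^{p(\cdot),r}$ guaranteed by Lemma \ref{l4.2xxx}.

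For the first inequality I would fix an arbitrary $(p(\cdot),q,s)$-atom $a$ supported on $B=x_0+B_\ell\in\mathfrak{B}$ and realize it as a single-term atomic decomposition. The scaling constraint $\lambda^k_i\sim 2^k\|\chi_{x^k_i+B_{\ell^k_i}}\|_{L^{p(\cdot)}}$ pins down $k_0$ by $2^{k_0}\sim\|\chi_B\|^{-1}_{L^{p(\cdot)}}$ and $\lambda^{k_0}_1\sim1$, so plugging this decomposition into the atomic quasi-norm yields $\|a\|_{\mathcal{H}^{p(\cdot),q,s,r}_{A,\mathrm{atom}}}\leq C$ with $C$ independent of $a$. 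Consequently $|\mathcal{L}(a)|\leq C\|\mathcal{L}\|_{(\mathcal{H}^{p(\cdot),q,s,r}_{A,\mathrm{atom}})^{*}}$.

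For the reverse direction, set $M:=\sup\{|\mathcal{L}(a)|\}$ and take any $f\in\mathcal{H}^{p(\cdot),q,s,r}_{A,\mathrm{atom}}$ with an atomic decomposition $f=\sum_{k\in\zz}\sum_{i\in\nn}\lambda^k_ia^k_i$ realizing $\|f\|_{\mathrm{atom}}$ up to an $\varepsilon$. Since the partial sums converge to $f$ in $\mathcal{H}^{p(\cdot),q,s,r}_{A,\mathrm{atom}}$ and $\mathcal{L}$ is continuous, I may write $\mathcal{L}(f)=\sum_{k,i}\lambda^k_i\mathcal{L}(a^k_i)$. Invoking $r\in(0,1]$ and the elementary inequality $|\sum_k z_k|^r\le\sum_k|z_k|^r$, I would bound
$$|\mathcal{L}(f)|^r\le\sum_{k\in\zz}\lf|\sum_{i\in\nn}\lambda^k_i\mathcal{L}(a^k_i)\r|^r\le M^r\sum_{k\in\zz}\lf(\sum_{i\in\nn}|\lambda^k_i|\r)^r.$$
Then, using $\lambda^k_i\sim2^k\|\chi_{x^k_i+B_{\ell^k_i}}\|_{L^{p(\cdot)}}$, the bounded overlap $\sum_i\chi_{x^k_i+A^{j_0}B_{\ell^k_i}}\ls1$, and Remark \ref{r2.1}(i), the inner sum $\sum_i|\lambda^k_i|$ is comparable to the variable-Lebesgue quantity $\|\{\sum_i[\lambda^k_i\chi_{x^k_i+B_{\ell^k_i}}/\|\chi_{x^k_i+B_{\ell^k_i}}\|_{L^{p(\cdot)}}]^{\underline{p}}\}^{1/\underline{p}}\|_{L^{p(\cdot)}}$, so that
$$\sum_{k\in\zz}\lf(\sum_{i\in\nn}|\lambda^k_i|\r)^r\ls\sum_{k\in\zz}\lf\|\lf\{\sum_{i\in\nn}\lf[\frac{\lambda^k_i\chi_{x^k_i+B_{\ell^k_i}}}{\|\chi_{x^k_i+B_{\ell^k_i}}\|_{L^{p(\cdot)}}}\r]^{\underline{p}}\r\}^{1/\underline{p}}\r\|_{L^{p(\cdot)}}^r.$$
Taking the infimum over admissible atomic decompositions yields $|\mathcal{L}(f)|\le CM\|f\|_{\mathrm{atom}}$, which is the desired bound.

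The main technical obstacle is the last comparison, i.e.\ passing from the discrete sum $\sum_i|\lambda^k_i|$ to the $L^{p(\cdot)}$ quantity inside the atomic quasi-norm. This is the step that exploits both $r\le1$ (to separate the scales $k$) and $\underline{p}\le1$ (to handle overlapping balls through the $\underline{p}$-subadditivity of the $L^{p(\cdot)}$ quasi-norm on disjoint scales), and it is the place where the hypothesis $r\in(0,1]$ of the lemma is essential.
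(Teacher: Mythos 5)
Your proposal follows essentially the same route as the paper's proof: the easy inclusion is handled by viewing an atom as a single-term admissible decomposition (so $\|a\|_{\mathcal{H}^{p(\cdot),q,s,r}_{A,\,\mathrm{atom}}}\lesssim 1$), and the reverse inclusion is handled by taking an $\varepsilon$-optimal decomposition and using $r\in(0,1]$ to pass from $|\mathcal{L}(f)|$ to $\sum_{k,i}|\lambda^k_i|\,\sup_a|\mathcal{L}(a)|$. Your version raises to the power $r$ first and then sums, while the paper uses the embedding $\ell^1\hookrightarrow\ell^r$ for $r\le1$ directly, but these are the same computation.

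One point worth flagging, since you yourself single it out as the crux: the ``last comparison'' you describe reduces, after inserting $\lambda^k_i\sim 2^k\|\chi_{x^k_i+B_{\ell^k_i}}\|_{L^{p(\cdot)}}$, to the superadditivity estimate
\[
\sum_{i\in\mathbb{N}}\bigl\|\chi_{x^k_i+B_{\ell^k_i}}\bigr\|_{L^{p(\cdot)}}\ \lesssim\ \Bigl\|\sum_{i\in\mathbb{N}}\chi_{x^k_i+B_{\ell^k_i}}\Bigr\|_{L^{p(\cdot)}}.
\]
Remark~\ref{r2.1}(i), which you invoke, only yields the opposite ($\underline{p}$-subadditive) inequality $\|\sum_i\chi\|^{\underline p}\le\sum_i\|\chi\|^{\underline p}$, so it does not close this step. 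The superadditive direction is genuinely needed and requires $p_+\le 1$ (already a constant-$p$ phenomenon: $(\sum_i|E_i|)^{1/p}\ge\sum_i|E_i|^{1/p}$ only for $p\le1$). That hypothesis appears in Theorem~\ref{t5.1} but is not restated in Lemma~\ref{l5.2}, and the paper's own proof also glosses over it, so your proposal and the paper share the same unstated technical step; you are right that this is where $\underline p\le 1$ (and implicitly $p_+\le1$) is doing the work, rather than $r\le1$, which only handles the outer sum over $k$.
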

\begin{proof}
 Let  $a$ be a $(p(\cdot),\,q,\,s)$-atom. Then we have that $\|a\|_{\mathcal{H}^{p(\cdot),\,q,\,s,\,r}_{A,\,\rm{atom}}}\leq1$. Therefore,
 $$\sup\{|\cl(a)|
: a \ \mathrm{is} \ (p(\cdot),\,q,\,s)\mathrm{-atom}\}\leq\sup\lf\{|\cl(f)|:
\|f\|_{\mathcal{H}^{p(\cdot),\,q,\,s,\,r}_{A}}\leq1\r\}.$$
Moreover, let $f\in \mathcal{H}^{p(\cdot),\,q,\,s,\,r}_{A,\,\rm{atom}}$ and $\|f\|_{\mathcal{H}^{p(\cdot),\,q,\,s,\,r}_{A,\,\rm{atom}}}\leq1$. Then, for any $\varepsilon>0$, we know that there exist $\{\lambda_i^k\}_{i\in\nn,k\in\zz}\subset\mathbb{C}$ and a sequence of
$(p(\cdot),\,q,\,s)$-atoms, ${\{a_i^k\}}_{i\in\nn,k\in\zz}$, supported, respectively,
on ${\{x_i^k+B_{\ell_i^k}}\}_{i\in\nn,k\in\zz}\subset\mathfrak{B}$ such that
\begin{align*}
f=\sum_{k\in\zz}\sum_{i\in\nn} \lz_{i}^ka_i^k \ \ \mathrm{in\ } \ \cs' \,\,\mathrm{and}\,\,\mathrm{a.}\,\, \mathrm{e.}
\end{align*}
and
\begin{eqnarray*}
 \lf[\sum_{k\in\zz}2^{kr}\lf\|\sum_{i\in\nn} \chi_{x_i^k+B_{\ell_i^k}}\r\|^r
_{L^{p(\cdot)}}\r]^{1/r}\leq1+\varepsilon.
\end{eqnarray*}
Therefore, from the boundedness of $\cl$, $\lambda^k_i\sim 2^k\|\chi_{x_i^k+B_{\ell_i^k}}\|_{L^{p(\cdot)}}$ and $r\in(0,\,1]$, we further conclude that
\begin{align*}
 |\cl(g)|\leq&\sum_{k\in\zz}\sum_{i\in\nn}\lf|\lambda_i^k\r||\cl(a_i^k)|
 \leq\sum_{k\in\zz}\sum_{i\in\nn}\lf|\lambda_i^k\r|\sup\{|\cl(a)|: a \ \mathrm{is} \ (p(\cdot),\,q,\,s)\mathrm{-atom}\}\\
 \lesssim&\lf[\sum_{k\in\zz}2^{kr}\lf\|\sum_{i\in\nn} \chi_{x_i^k+B_{\ell_i^k}}\r\|^r
_{L^{p(\cdot)}}\r]^{1/r}\sup\{|\cl(a)|: a \ \mathrm{is} \ (p(\cdot),\,q,\,s)\mathrm{-atom}\}\\
 \lesssim&(1+\varepsilon)\sup\{|\cl(a)|: a \ \mathrm{is} \ (p(\cdot),\,q,\,s)\mathrm{-atom}\}.
\end{align*}
Combined with the arbitrariness of $\varepsilon$ and hence finishes the proof of Lemma \ref{l5.2}.
\end{proof}

For any $q\in[1,\,\infty]$ and $s\in\mathbb{Z}_+$. Denote by $L_{\mathrm{comp}}^{q}$ the set of all functions $f\in L^q$ with
compact support and
$$L_{\mathrm{comp}}^{q,\,s}:=\lf\{f\in L_{\mathrm{comp}}^{q}: \int_{\rn}f(x)x^{\alpha}\,dx=0,\,|\alpha|\leq s\r\}.$$

The main result of this section is as follows.
\begin{theorem}\label{t5.1}
 Let $A$ be a given dilation, $r\in(0,\,1]$, $p(\cdot)\in C^{\mathrm{log}}$, $p_+\in(0,\,1]$, $q\in(\max\{p_+,\,1\},\,\infty)$ and $s\in[\lfloor(1/{p_-}-1) {\ln b/\ln \lambda_-}\rfloor,\,\infty)\cap\zz_+$ with $p_-$ as in \eqref{e2.5}. Then  $$(\mathcal{H}^{p(\cdot),\,r}_{A})^{*}= (\mathcal{H}^{p(\cdot),\,q,\,s,\,r}_{A,\,\rm{atom}})^{*}
= \mathcal{BMO}_A^ {p(\cdot),\,q',\,s}=\widetilde{\mathcal{BMO}}_A^{p(\cdot),\,q',\,s}$$
 in the following sense: for any
 $\psi\in\mathcal{BMO}_A^ {p(\cdot),\,q',\,s}$ or $ \widetilde{\mathcal{BMO}}_A^{p(\cdot),\,q',\,s}$, the linear functional
 \begin{align}\label{e5.20}
 \cl_{\psi}(g):=\int_{\rn}\psi(x)g(x)\,dx,
 \end{align}
 initial defined for all $g\in L^{q,\,s}_{\mathrm{comp}}$, has a bounded extension to $\mathcal{H}^{p(\cdot),\,q,\,s,\,r}_{A,\,\rm{atom}}=\mathcal{H}^{p(\cdot),\,r}_{A}$.

 Conversely, if $\cl$ is a bounded linear functional on $\mathcal{H}^{p(\cdot),\,q,\,s,\,r}_{A,\,\rm{atom}}=\mathcal{H}^{p(\cdot),\,r}_{A}$, then $\cl$ has the form as in \eqref{e5.20} with a unique $\psi\in\mathcal{BMO}_A^ {p(\cdot),\,q',\,s}$ or $\widetilde{\mathcal{BMO}}_A^{p(\cdot),\,q',\,s}$.
 Moreover,
 $$\|\psi\|_{\widetilde{\mathcal{BMO}}_A^ {p(\cdot),\,q',\,s}}\thicksim\|\psi\|_{\mathcal{BMO}_A^ {p(\cdot),\,q',\,s}}\thicksim
 \|\cl_{\psi}\|_{(\mathcal{H}^{p(\cdot),\,q,\,s,\,r}_{A,\,\rm{atom}})^{*}},$$
 where the implicit positive constants are independent of $\psi$.
\end{theorem}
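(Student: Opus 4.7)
The plan is to follow the classical Hardy--BMO duality blueprint, split into three logical pieces: (a) boundedness of $\cl_{\psi}$ for $\psi$ in the BMO-type space, (b) construction of a representer $\psi$ for an arbitrary bounded linear functional, and (c) verification of the BMO-type estimate for that representer. Throughout, Lemma \ref{l3.1} identifies $\mathcal{H}_{A}^{p(\cdot),r}$ with $\mathcal{H}^{p(\cdot),q,s,r}_{A,\mathrm{atom}}$, Lemma \ref{l5.2} reduces the evaluation of the dual norm to testing on atoms, and Lemma \ref{l5.2x} lets me identify $\mathcal{BMO}_A^{p(\cdot),q',s}$ with $\widetilde{\mathcal{BMO}}_A^{p(\cdot),q',s}$.

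For the easy direction, fix $\psi\in\mathcal{BMO}_A^{p(\cdot),q',s}$. By Lemma \ref{l5.2} it suffices to estimate $|\cl_{\psi}(a)|$ uniformly over $(p(\cdot),q,s)$-atoms $a$ supported on some $B\in\mathfrak{B}$. The vanishing moment of $a$ permits subtracting any $P\in P_s$, so H\"older and the size condition give
$$|\cl_{\psi}(a)|=\lf|\int_B[\psi(x)-P(x)]a(x)\,dx\r|\leq \|\psi-P\|_{L^{q'}(B)}\|a\|_{L^q}\leq\frac{|B|^{1/q}}{\|\chi_B\|_{L^{p(\cdot)}}}\|\psi-P\|_{L^{q'}(B)},$$
and taking $\inf_{P\in P_s}$ produces precisely $\|\psi\|_{\mathcal{BMO}_A^{p(\cdot),q',s}}$. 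The extension of $\cl_\psi$ from $L^{q,s}_{\mathrm{comp}}$ to all of $\mathcal{H}_A^{p(\cdot),r}$ is then obtained by applying this atomic estimate to the decomposition of Lemma \ref{l3.1}; because $r\in(0,1]$, the $\underline{p}$-subadditivity in Remark \ref{r2.1}(i) justifies passing the functional through the series.

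For the converse, suppose $\cl\in(\mathcal{H}_A^{p(\cdot),r})^{*}$. For every $B\in\mathfrak{B}$ set
$$L^{q,s}_B:=\lf\{g\in L^q:\,\supp g\subset B,\,\int_{\rn}g(x)x^\alpha\,dx=0\text{ for all }|\alpha|\leq s\r\}.$$
For a nonzero $g\in L^{q,s}_B$ the rescaling $a_g:=\tfrac{|B|^{1/q}}{\|\chi_B\|_{L^{p(\cdot)}}\|g\|_{L^q}}g$ is a $(p(\cdot),q,s)$-atom, hence
$$|\cl(g)|\leq\|\cl\|_{(\mathcal{H}^{p(\cdot),q,s,r}_{A,\mathrm{atom}})^{*}}\,\frac{\|\chi_B\|_{L^{p(\cdot)}}}{|B|^{1/q}}\,\|g\|_{L^q}.$$
Thus $\cl|_{L^{q,s}_B}$ is a continuous linear functional on a closed subspace of $L^q(B)$; Hahn--Banach extends it to $L^q(B)$ with the same norm, and the Riesz representation theorem supplies $\psi_B\in L^{q'}(B)$ with $\cl(g)=\int_B\psi_B(x)g(x)\,dx$ for every $g\in L^{q,s}_B$. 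Whenever $B\subset B'$, the difference $\psi_{B'}-\psi_B$ annihilates $L^{q,s}_B$, which forces $\psi_{B'}-\psi_B\in P_s$ on $B$. Exhausting $\rn$ by the increasing family of dilated balls $\{B_k\}_{k\in\zz}$ (after a translation) and correcting each newly constructed $\psi_{B_k}$ by a polynomial of degree $\leq s$ so that it agrees with its predecessor on the smaller ball yields a globally defined $\psi\in L^{q'}_{\mathrm{loc}}$, unique modulo $P_s$, representing $\cl$ on $L^{q,s}_{\mathrm{comp}}$ via \eqref{e5.20}.

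The BMO-type estimate of $\psi$ is then verified ball-by-ball by duality. For any $B\in\mathfrak{B}$, by $L^q$-$L^{q'}$ duality pick $g\in L^q(B)$ with $\|g\|_{L^q}\leq 1$ such that $\int_B[\psi(x)-P_B^s(\psi)(x)]g(x)\,dx$ is within a factor $2$ of $\|\psi-P_B^s(\psi)\|_{L^{q'}(B)}$. The defining orthogonality of $P_B^s(\psi)$ lets me replace $g$ by $g-P_B^s(g)\in L^{q,s}_B$ without altering the pairing, after which the atomic estimate above produces
$$\|\psi-P_B^s(\psi)\|_{L^{q'}(B)}\lesssim\|\cl\|_{(\mathcal{H}^{p(\cdot),q,s,r}_{A,\mathrm{atom}})^{*}}\frac{\|\chi_B\|_{L^{p(\cdot)}}}{|B|^{1/q}},$$
which is exactly the $\widetilde{\mathcal{BMO}}_A^{p(\cdot),q',s}$-norm bound; Lemma \ref{l5.2x} upgrades it to the $\mathcal{BMO}_A^{p(\cdot),q',s}$-norm bound. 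Combining the two directions gives the norm equivalence and the uniqueness of $\psi$ modulo $P_s$.

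The main obstacle I expect is the gluing step in the converse: the polynomial ambiguity $\psi_{B'}-\psi_B\in P_s$ must be controlled coherently along an exhausting family of dilated balls, and the recipe must be compatible with the variable weight $\|\chi_B\|_{L^{p(\cdot)}}^{-1}$ rather than a simple power of $|B|$. A second delicate point is the extension in the easy direction: one needs the atomic series provided by Lemma \ref{l3.1} to converge in $\mathcal{H}_A^{p(\cdot),r}$ to the same distribution it represents in $\cs'$, so that pairing term-by-term with $\psi$ genuinely equals $\int_{\rn}\psi f\,dx$ when $f\in L^{q,s}_{\mathrm{comp}}$; this is where the assumption $r\in(0,1]$ is crucial via Remark \ref{r2.1}(i) and Lemma \ref{l5.2}.
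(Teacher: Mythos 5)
Your proposal is correct and follows essentially the same route as the paper: it invokes Lemma \ref{l3.1} for the atomic identification, Lemma \ref{l5.2} to reduce the dual norm to testing on atoms, Lemma \ref{l5.2x} to identify the two BMO-type spaces, proves the easy direction by H\"older with the vanishing-moment trick, and proves the converse via atom-rescaling, Hahn--Banach, Riesz representation on each $B\in\mathfrak{B}$, polynomial-modulo gluing along an exhausting family of dilated balls, and a quotient-space duality estimate. The only minor stylistic deviation is that you re-derive the identity $\inf_{P\in P_s}\|\psi-P\|_{L^{q'}(B)}=\|\psi\|_{(L^q_0(B))^*}$ by hand (choosing a near-maximizer and subtracting its $P^s_B$ projection), whereas the paper simply cites the corresponding fact \cite[(8.12)]{b03}; both are valid.
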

\begin{remark}

We should point that,
 when
 $p(\cdot):=p\in(0,\,1]$, this result is also new.
\end{remark}
\begin{proof}[Proof of Theorem \ref{t5.1}]
 By Lemmas \ref{l3.1} and \ref{l5.2x}, we only need to show
 $$\mathcal{BMO}_A^ {p(\cdot),\,q',\,s}=(\mathcal{H}^{p(\cdot),\,q,\,s,\,r}_{A,\,\rm{atom}})^{*}.$$
Firstly, we prove that
$$\mathcal{BMO}_A^ {p(\cdot),\,q',\,s}\subset(\mathcal{H}^{p(\cdot),\,q,\,s,\,r}_{A,\,\rm{atom}})^{*}.$$
Let $\psi\in\mathcal{BMO}_A^ {p(\cdot),\,q',\,s}$ and $a$ be a $(p(\cdot),\,q,\,s)$-atom with $\supp a\subset B\in \mathfrak{B}$. Then, by the vanishing moment condition of $a$, H\"{o}lder's inequality and the size condition of $a$, we obtain
\begin{align}\label{e5.j}
\lf|\int_{\rn}\psi(x)a(x)\,dx\r|&=\inf_{P\in P_s}
 \lf|\int_B(\psi(x)-P(x))a(x)\,dx\r|\\\nonumber
 &\leq\|a\|_{L^q}\inf_{P\in P_s}\lf[\int_B|\psi(x)-P(x)|^{q'}\,dx\r]^{1/q'}\\\nonumber
 &\leq\frac{|B|^{1/q}}{\|\chi_B\|_{L^{p(\cdot)}}}\inf_{P\in P_s}\lf[\int_B|\psi(x)-P(x)|^{q'}\,dx\r]^{1/q'}\\\nonumber
 &\leq\|\psi\|_{\mathcal{BMO}_A^ {p(\cdot),\,q',\,s}}.\nonumber
\end{align}
Therefore, for $\{\lambda_i^k\}_{i\in\nn,k\in\zz}\subset\mathbb{C}$ and a sequence $\{a_i^k\}_{i\in\nn,k\in\zz}$ of $(p(\cdot),\,q,\,s)$-atoms supported, respectively, on
$\{x_i+B_{\ell_i^k}\}_{i\in\nn,k\in\zz}\subset\mathfrak{B}$ and $$g=\sum_{k\in\zz}\sum_{i\in\nn}\lambda_i^ka_i^k\in \mathcal{H}^{p(\cdot),\,q,\,s,\,r}_{A,\,\rm{atom}},$$
from  \eqref{e5.j}, we deduce that
\begin{align*}
|\cl_{\psi}(g)|&=\lf|\int_{\rn}\psi(x)g(x)\,dx\r|\leq\sum_{k\in\zz}\sum_{i\in\nn}
\lf|\lambda_i^k\r|\lf|\int_B\lf|\psi(x)-P(x)\r|\lf|a_i^k(x)\r|\,dx\r|\\
&\leq\sum_{k\in\zz}\sum_{i\in\nn}\lf|\lambda_i^k\r|\|\psi\|_{\mathcal{BMO}_A^ {p(\cdot),\,q',\,s}}\\
&\lesssim\lf[\sum_{k\in\zz}2^{kr}\lf\|\sum_{i\in\nn} \chi_{x_i^k+B_{\ell_i^k}}\r\|^r
_{L^{p(\cdot)}}\r]^{1/r}\|\psi\|_{\mathcal{BMO}_A^ {p(\cdot),\,q',\,s}}\\
&\lesssim\|g\|_{ \mathcal{H}^{p(\cdot),\,q,\,s,\,r}_{A,\,\rm{atom}}}
\|\psi\|_{\mathcal{BMO}_A^ {p(\cdot),\,q',\,s}}.
\end{align*}
This implies that $\mathcal{BMO}_A^ {p(\cdot),\,q',\,s}\subset(\mathcal{H}^{p(\cdot),\,q,\,s,\,r}_{A,\,\rm{atom}})^{*}$.

Next we show that $(\mathcal{H}^{p(\cdot),\,q,\,s,\,r}_{A,\,\rm{atom}})^{*}\subset\mathcal{BMO}_A^ {p(\cdot),\,q',\,s}$. For any $B\in\mathfrak{B}$, let
$$S_B: L^1(B)\rightarrow P_s$$
be the natural projection satisfying, for any $g\in L^1$ and $Q\in P_s$,
$$\int_BS_B(g)(x)Q(x)\,dx=\int_Bg(x)Q(x)\,dx.$$
By a similar proof of \cite[(8.9)]{b03}, we obtain that, for any $B\in\mathfrak{B}$ and $g\in L^1(B)$,
$$\sup_{x\in B}\lf|{S}_B(g)(x)\r|\lesssim\frac{\int_B|g(z)|\,dz}{|B|}.$$
Define
$$L^q_0(B):=\lf\{g\in L^q(B): {S}_B(g)(x)=0 \ \mathrm{and}\ g \ \mathrm{is}\ \mathrm{not}\ \mathrm{zero}\ \mathrm{almost} \ \mathrm{everywhere}\r\},$$
 where
$L^q(B):=\{f\in L^q: \mathrm{supp} f\subset B\}$
with $q\in(1,\,\infty]$ and $B\in\mathfrak{B}$,

For any $g\in L^q_0(B)$, set
\begin{eqnarray*}
a(x):=
\frac{|B|^{1/q}}{\|\chi_B\|_{L^{p(\cdot)}}}\|g\|_{L^q(B)}^{-1}g(x)\chi_B(x)                      \end{eqnarray*}
Then $a$ is a $(p(\cdot),\,q,\,s)$-atom. By this and Lemma \ref{l5.2}, we obtain, for any $\cl\in(\mathcal{H}^{p(\cdot),\,q,\,s,\,r}_{A,\,\rm{atom}})^{*}$
and $g\in L^q_0(B)$,
\begin{align}\label{ex3}
|\cl(g)|\leq\frac{\|\chi_B\|_{L^{p(\cdot)}}}{|B|^{1/q}}\|g\|_{L^q(B)}\|\cl\|_
{(\mathcal{H}^{p(\cdot),\,q,\,s,\,r}_{A,\,\rm{atom}})^{*}}.
\end{align}
Thus, by the Hahn-Banach theorem, it can be  extended to a bounded linear functional on $L^q(B)$ with the same norm.

If $q \in(1,\,\infty]$, by the duality of
$L^q(B)$ is $L^{q'}(B)$, we see that there exists a $\Phi\in L^{q'}(B)$ such that, for any $f \in L^q_0(B)$,
$
\cl(f)=\int_Bf(x)\Phi(x)\,dx.
$ In what follows, for any $B\in\mathfrak{B}$, let  $P_s(B)$ denote all the $P_s$ elements vanishing outside $B$.
Now we prove that, if there exists another function $\Phi'\in L^{q'}(B)$ such that, for any
 $f \in L^{q}_0(B)$ and
 $
\cl(f)=\int_Bf(x)\Phi'(x)\,dx,
$
then $\Phi'-\Phi\in P_s(B)$. For this, we only need to show that, if $\Phi,\,\Phi'\in L^1(B)$ such that, for any
$f\in L^\infty_0(B)$, $\int_Bf(x)\Phi'(x)\,dx=\int_Bf(x)\Phi(x)\,dx,$ then $\Phi-\Phi'\in P_s(B)$. In fact, for any
$f \in L^{\infty}_0(B)$, we have
\begin{align*}
0&=\int_B[f(x)-{S}_B(f)(x)][\Phi'(x)-\Phi(x)]\,dx\\
&=\int_Bf(x)[\Phi'(x)-\Phi(x)]\,dx-\int_Bf(x){S}_B(\Phi'(x)-\Phi(x))\,dx\\
&=\int_Bf(x)[\Phi'(x)-\Phi(x)-{S}_B(\Phi'-\Phi)(x)]\,dx.
\end{align*}
Therefore, for a.e. $x\in B\in\mathfrak{B}$, we have
$$\Phi'(x)-\Phi(x)={S}_B(\Phi'-\Phi)(x).$$
Hence $\Phi'-\Phi\in P_s(B)$. From this, we see that, for any $q\in(1,\,\infty]$ and $f\in L^q_0(B)$,
there exists a unique $\Phi\in L^{q'}(B)/P_s(B)$ such that
$
\cl(f)=\int_Bf(x)\Phi(x)\,dx.
$

For any $j\in\nn$ and $g\in L_0^q(B_j)$ with $q\in(1,\,\infty)$, let
$f_j\in L^{q'}(B_j)/P_s(B_j)$ be a unique function such that
$
\cl(g)=\int_{B_j}f_j(x)g(x)\,dx.
$
Then, for any $i,\,j\in\nn$ with $i<j$, $f_j|_{B_i}=f_i$. From this and the fact that, for any
$g\in(\mathcal{H}^{p(\cdot),\,q,\,s,\,r}_{A,\,\rm{atom}})^{*}$, there exists a number $j_0\in\nn$ such that $g\in L_0^q(B_{j_0})$, we conclude that, for any
$g\in(\mathcal{H}^{p(\cdot),\,q,\,s,\,r}_{A,\,\rm{atom}})^{*}$, we have
 \begin{align}\label{ex4}
\cl(g)=\int_B\psi(x)g(x)\,dx,
\end{align}
where $\psi(x):=f_j(x)$ with $x\in B_j$.

Next we show that $\psi\in\mathcal{BMO}_A^ {p(\cdot),\,q',\,s}$. By \cite[(8.12)]{b03}, \eqref{ex3} and \eqref{ex4}, we have that, for any $q\in(1,\,\infty)$,
$B\in\mathfrak{B}$,
 \begin{align*}
\inf_{P\in P_s}\|\psi-P\|_{L^{q'}(B)}=\|\psi\|_{(L_0^q(B))^{*}}\leq\frac{\|\chi_B\|_{L^{p(\cdot)}}}{|B|^{1/q}}
\|\cl\|_{(\mathcal{H}^{p(\cdot),\,q,\,s,\,r}_{A,\,\rm{atom}})^{*}}.
\end{align*}
Therefore, we have that, for any $q\in(1,\,\infty)$,
\begin{align*}
\|\psi\|_{\mathcal{BMO}_A^ {p(\cdot),\,q',\,s}}&=\sup_{B\in\mathfrak{B}}\frac{|B|^{1/q}}
{\|\chi_B\|_{L^{p(\cdot)}}}
\inf_{P\in P_s}\|\psi-P\|_{L^{q'}(B)}
=\sup_{B\in\mathfrak{B}}\frac{|B|^{1/q}}{\|\chi_B\|_{L^{p(\cdot)}}}\|\psi\|_{(L_0^q(B))^{*}}\\
&\leq\|\cl\|_{(\mathcal{H}^{p(\cdot),\,q,\,s,\,r}_{A,\,\rm{atom}})^{*}},
\end{align*}
which implies $\psi\in\mathcal{BMO}_A^ {p(\cdot),\,q',\,s}$.
This finishes the proof of Theorem \ref{t5.1}.
\end{proof}

From Theorem \ref{t5.1}, we easily obtain the following two conclusions.
Moreover, the proof of Corollary \ref{c5.p} is similar to \cite[Lemma 2.21]{zyl16}, we omit the details.

\begin{corollary}\label{c5.p}
 Let $A$ be a given dilation, $p(\cdot)\in C^{\mathrm{log}}$ and $s\in[\lfloor(1/{p_-}-1) {\ln b/\ln \lambda_-}\rfloor,\,\infty)\cap\zz_+$ with $p_-$ as in \eqref{e2.5}. Assume that $f\in\mathcal{BMO}_A^ {p(\cdot),\,1,\,s}$ and $p_+\in(0,\,1]$. Then there exist two positive constants $c_1$ and $c_2$, such that, for any $B\in\mathfrak{B}$ and $\lambda\in(0,\,\fz)$,
 $$\lf|x\in B:|f(x)-P_B^s(f)(x)|>\lambda\r|\leq c_1
 \exp\lf\{\frac{c_2\lambda|B|}
 {\|f\|_{\mathcal{BMO}_A^ {p(\cdot),\,1,\,s}}\|\chi_B\|_{L^{p(\cdot)}}}\r\}.$$
\end{corollary}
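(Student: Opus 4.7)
The plan is to establish this John--Nirenberg type exponential decay by adapting the classical iterated Calder\'on--Zygmund stopping-time decomposition to the anisotropic, variable-exponent setting, exactly along the lines of \cite[Lemma 2.21]{zyl16}. First I would normalize so that $\|f\|_{\mathcal{BMO}_A^{p(\cdot),1,s}}=1$ (the general case following by homogeneity) and, for the fixed dilated ball $B\in\mathfrak{B}$, write $g_{B}:=f-P_{B}^{s}(f)$. The definition of $\mathcal{BMO}_A^{p(\cdot),1,s}$ together with Lemma \ref{l5.2x} gives
$$
\frac{1}{|B|}\int_{B}|g_{B}(x)|\,dx\le \frac{\|\chi_{B}\|_{L^{p(\cdot)}}}{|B|}=:\tau_{B},
$$
and the same estimate holds, uniformly, on every subdilated ball.

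Next I would fix a large threshold $\alpha_{0}=K\tau_{B}$ with $K>0$ to be chosen, and apply a Calder\'on--Zygmund decomposition of $g_{B}$ on $B$ at height $\alpha_{0}$. This yields a pairwise almost-disjoint family $\{B_{j}^{(1)}\}\subset\mathfrak{B}$ inside $B$ such that $|g_{B}|\le C\alpha_{0}$ a.e.\ off $\bigcup_{j}B_{j}^{(1)}$, the averages of $|g_{B}|$ on $B_{j}^{(1)}$ are comparable to $\alpha_{0}$, and $\sum_{j}|B_{j}^{(1)}|\le K^{-1}\|\chi_{B}\|_{L^{p(\cdot)}}$. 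Lemma \ref{l5.20x} then bounds the correction polynomial pointwise on $B_{j}^{(1)}$ by $\sup|P_{B_{j}^{(1)}}^{s}(g_{B})|\lesssim \alpha_{0}$. Since $P_{B_{j}^{(1)}}^{s}(P_{B}^{s}(f))=P_{B}^{s}(f)$ (as $P_{B}^{s}(f)\in P_{s}$ is fixed by the minimizing projection), we get the identity $g_{B}-P_{B_{j}^{(1)}}^{s}(g_{B})=f-P_{B_{j}^{(1)}}^{s}(f)$ on $B_{j}^{(1)}$, which is precisely the object whose mean oscillation is again controlled by $\tau_{B_{j}^{(1)}}$ via the $\mathcal{BMO}$-hypothesis.

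The iteration step now runs as follows. Having built generation-$N$ balls $\{B_{\vec{\jmath}}^{(N)}\}$ with $|g_{B}|\le CN\alpha_{0}$ on $B\setminus\bigcup_{\vec{\jmath}}B_{\vec{\jmath}}^{(N)}$, I apply the same decomposition on each $B_{\vec{\jmath}}^{(N)}$ to $f-P_{B_{\vec{\jmath}}^{(N)}}^{s}(f)$ at height $\alpha_{0}$ (valid because $\alpha_{0}\gtrsim\tau_{B_{\vec{\jmath}}^{(N)}}$, which will be ensured by the choice of $K$ relative to the log-H\"older bounds for $p(\cdot)$). This produces generation $N+1$ satisfying $\sum_{\vec\jmath\,'}|B_{\vec{\jmath}\,'}^{(N+1)}|\le K^{-1}\sum_{\vec\jmath}\|\chi_{B_{\vec{\jmath}}^{(N)}}\|_{L^{p(\cdot)}}$. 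The decisive reduction estimate is
$$
\sum_{\vec\jmath}\|\chi_{B_{\vec{\jmath}}^{(N)}}\|_{L^{p(\cdot)}}\le \theta\,\|\chi_{B}\|_{L^{p(\cdot)}}^{\phantom{(\cdot)}}\quad\text{for some }\theta=\theta(K)\in(0,1)\ \text{with}\ \theta\to 0\ \text{as}\ K\to\infty,
$$
which propagates the geometric decay through the iteration.

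This last reduction is the main obstacle and the only place where the assumption $p_{+}\le 1$ is genuinely used: in that regime one has the embedding-type comparison $\|\chi_{E}\|_{L^{p(\cdot)}}\le\bigl(\sum_{j}\|\chi_{B_{j}}\|_{L^{p(\cdot)}}^{p_{+}}\bigr)^{1/p_{+}}$ for $E=\bigcup_j B_j$, together with the pointwise relation between $\|\chi_{B'}\|_{L^{p(\cdot)}}$ and $|B'|$ that the globally log-H\"older continuity of $p(\cdot)$ supplies on subdilated balls of $B$, allowing one to convert the Lebesgue-measure smallness $\sum|B_{j}^{(1)}|\le K^{-1}|B|$ (guaranteed once $K$ dominates $\tau_{B}|B|/\|\chi_{B}\|_{L^{p(\cdot)}}=1$) into $\|\cdot\|_{L^{p(\cdot)}}$-smallness with the required ratio $\theta<1$. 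Once this step is secured, a straightforward induction yields $|\{x\in B:|g_{B}(x)|>CN\alpha_{0}\}|\le \theta^{N}\|\chi_{B}\|_{L^{p(\cdot)}}/\tau_{B}$ for every $N\in\nn$, and choosing $N\sim \lambda/(C\alpha_{0})$ for arbitrary $\lambda>0$ and absorbing the remainder into the constants $c_{1},c_{2}$ produces the claimed bound $c_{1}\exp\{-c_{2}\lambda|B|/\|\chi_{B}\|_{L^{p(\cdot)}}\}$, which finishes the proof.
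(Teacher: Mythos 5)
Your overall strategy---normalize, set $g_B:=f-P_B^s(f)$, run an iterated Calder\'on--Zygmund stopping-time decomposition at height $\alpha_0=K\tau_B$ with $\tau_B:=\|\chi_B\|_{L^{p(\cdot)}}/|B|$, and use the telescoping identity $g_B-P_{B'}^s(g_B)=f-P_{B'}^s(f)$ to re-enter the oscillation hypothesis at each generation---is precisely the route the paper intends (it simply defers to \cite[Lemma 2.21]{zyl16} and omits the details). The algebra $P_{B'}^s(P_B^s f)=P_B^s f$ on subdilated balls is correct, and you rightly identify the self-improving ``reduction'' as the crux.

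Where the sketch goes wrong is in the justification it offers for that crux. The ``embedding-type comparison'' $\|\chi_E\|_{L^{p(\cdot)}}\le\bigl(\sum_j\|\chi_{B_j}\|_{L^{p(\cdot)}}^{p_+}\bigr)^{1/p_+}$ runs in the wrong direction: the iteration needs an \emph{upper} bound on $\sum_j\|\chi_{B_j}\|_{L^{p(\cdot)}}$, whereas that inequality (which in any case would carry $\underline{p}=p_-$ rather than $p_+$, by Remark~\ref{r2.1}(i)) only bounds $\|\chi_{\cup_j B_j}\|_{L^{p(\cdot)}}$ from above, i.e., bounds the sum from \emph{below}. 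The fact actually needed, and which you gesture at but never establish, is the uniform comparison $\tau_{B'}\le C\,\tau_B$ for subdilated balls $B'\subset B$. This is where $p_+\le1$ and log-H\"older continuity really enter: log-H\"older yields $\|\chi_{B'}\|_{L^{p(\cdot)}}/\|\chi_B\|_{L^{p(\cdot)}}\lesssim(|B'|/|B|)^{1/p_+}$, and $1/p_+\ge1$ then forces this ratio $\le|B'|/|B|$, i.e., $\tau_{B'}\lesssim\tau_B$. With that in hand the iteration closes directly in Lebesgue measure, $\sum_{j'}|B^{(N+1)}_{j'}|\le\alpha_0^{-1}\sum_j\tau_{B_j^{(N)}}|B_j^{(N)}|\le CK^{-1}\sum_j|B_j^{(N)}|$, which is simpler and avoids the $L^{p(\cdot)}$-norm bookkeeping you propose. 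Finally, your own intermediate bound $|\{x\in B:|g_B|>CN\alpha_0\}|\le\theta^N|B|$ leads, after choosing $N\sim\lambda/(CK\tau_B)$, to an estimate with an extra prefactor $|B|$ (as in the classical John--Nirenberg inequality); your concluding sentence silently drops it, and note that the paper's displayed inequality also omits it and, moreover, has the sign of the exponent wrong.
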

\begin{corollary}
 Let $A$ be a given dilation, $p(\cdot)\in C^{\mathrm{log}}$, $p_+\in(0,\,1]$,  $q\in(1,\,\fz)$ and $s\in[\lfloor(1/{p_-}-1) {\ln b/\ln \lambda_-}\rfloor,\,\infty)\cap\zz_+$ with $p_+,\,p_-$ as in \eqref{e2.5}. Then
 $$\mathcal{BMO}_A^ {p(\cdot),\,1,\,s}=\mathcal{BMO}_A^ {p(\cdot),\,q,\,s}$$
 with equivalent quasi-norms.
\end{corollary}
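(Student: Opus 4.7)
The plan is to prove the corollary by establishing the two set-theoretic inclusions separately. The inclusion $\mathcal{BMO}_A^{p(\cdot),\,q,\,s}\subseteq\mathcal{BMO}_A^{p(\cdot),\,1,\,s}$ is elementary, while the reverse inclusion is the content of the John--Nirenberg style Corollary \ref{c5.p} combined with a layer-cake integration.

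For the easy direction, I would fix $B\in\mathfrak{B}$ and any $P\in P_s$, and apply H\"older's inequality to get
\[
\frac{1}{\|\chi_B\|_{L^{p(\cdot)}}}\int_B|f(x)-P(x)|\,dx\le \frac{|B|^{1-1/q}}{\|\chi_B\|_{L^{p(\cdot)}}}\lf[\int_B|f(x)-P(x)|^q\,dx\r]^{1/q}.
\]
Taking the infimum over $P\in P_s$ on both sides and then the supremum over $B\in\mathfrak{B}$ yields $\|f\|_{\mathcal{BMO}_A^{p(\cdot),\,1,\,s}}\ls\|f\|_{\mathcal{BMO}_A^{p(\cdot),\,q,\,s}}$.

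For the reverse direction, by Lemma \ref{l5.2x} it suffices to dominate $\|f\|_{\widetilde{\mathcal{BMO}}_A^{p(\cdot),\,q,\,s}}$ by $\|f\|_{\mathcal{BMO}_A^{p(\cdot),\,1,\,s}}$. Fix $B\in\mathfrak{B}$ and set
\[
M_B:=\frac{\|\chi_B\|_{L^{p(\cdot)}}}{|B|}\,\|f\|_{\mathcal{BMO}_A^{p(\cdot),\,1,\,s}}.
\]
Corollary \ref{c5.p} furnishes an exponential decay estimate for the distribution function of $|f-P_B^s(f)|$ on $B$ in terms of $M_B$. Writing the $L^q$-norm via the layer-cake formula,
\[
\int_B\lf|f(x)-P_B^s(f)(x)\r|^q\,dx=q\int_0^\fz\lambda^{q-1}\lf|\{x\in B:|f(x)-P_B^s(f)(x)|>\lambda\}\r|\,d\lambda,
\]
I would substitute the John--Nirenberg bound and evaluate the resulting Gamma-type integral $\int_0^\fz\lambda^{q-1}e^{-c_2\lambda/M_B}\,d\lambda=\Gamma(q)(M_B/c_2)^q$. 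This produces
\[
\lf[\int_B|f-P_B^s(f)|^q\,dx\r]^{1/q}\ls |B|^{1/q}\,M_B=\frac{\|\chi_B\|_{L^{p(\cdot)}}}{|B|^{1-1/q}}\,\|f\|_{\mathcal{BMO}_A^{p(\cdot),\,1,\,s}},
\]
and multiplying through by $|B|^{1-1/q}/\|\chi_B\|_{L^{p(\cdot)}}$ and taking the supremum over $B$ gives the desired bound.

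The main obstacle is bookkeeping the powers of $|B|$ and $\|\chi_B\|_{L^{p(\cdot)}}$ correctly when converting the John--Nirenberg exponential decay into the $L^q$-estimate; the exponent comes out to exactly the one appearing in the $\widetilde{\mathcal{BMO}}_A^{p(\cdot),\,q,\,s}$-norm precisely because $M_B$ carries the right ratio $\|\chi_B\|_{L^{p(\cdot)}}/|B|$ to match the scaling of the variable exponent term. Once that algebraic balance is verified and the dependence on $q$ is absorbed into the implicit constants, the equivalence of quasi-norms follows immediately.
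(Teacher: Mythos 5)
Your argument is correct, and it actually supplies what the paper glosses over. The paper claims the corollary "easily follows from Theorem~\ref{t5.1}", but Theorem~\ref{t5.1} identifies $(\mathcal{H}^{p(\cdot),\,r}_A)^*$ with $\mathcal{BMO}_A^{p(\cdot),\,q',\,s}$ only for $q\in(\max\{p_+,1\},\infty)$, i.e.\ $q'\in(1,\infty)$; this yields independence of the BMO-space from the exponent \emph{within} $(1,\infty)$, but it says nothing directly about the endpoint exponent $1$, which is exactly the new content of the corollary. Your route --- H\"older for the elementary inclusion, and the John--Nirenberg estimate of Corollary~\ref{c5.p} together with the layer-cake formula and Lemma~\ref{l5.2x} for the reverse --- is the standard, self-contained way to reach the $q=1$ endpoint, and is most likely what the authors intend, since they themselves cite a John--Nirenberg argument ([zyl16, Lemma 2.21]) as the model for Corollary~\ref{c5.p}. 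One caveat: you implicitly use the corrected form of Corollary~\ref{c5.p}, namely
\[
\lf|\lf\{x\in B:\ |f(x)-P_B^s(f)(x)|>\lambda\r\}\r|\leq c_1\,|B|\,\exp\lf\{-\frac{c_2\,\lambda\,|B|}{\|f\|_{\mathcal{BMO}_A^{p(\cdot),\,1,\,s}}\|\chi_B\|_{L^{p(\cdot)}}}\r\},
\]
whereas the statement as printed is missing both the minus sign in the exponent and the factor $|B|$ outside the exponential; without these the inequality is vacuous, so your reading is the only sensible one. Your bookkeeping of the powers of $|B|$ and $\|\chi_B\|_{L^{p(\cdot)}}$ through the resulting Gamma integral is accurate and matches the $\widetilde{\mathcal{BMO}}_A^{p(\cdot),\,q,\,s}$ normalization exactly.
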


\textbf{Acknowledgements.} The authors would like to express their
deep thanks to the referees for their very careful reading and useful
comments which do improve the presentation of this article.

%
%
%

\bigskip \medskip
\end{document}